\tikzset{join/.code=\tikzset{after node path={%
\ifx\tikzchainprevious\pgfutil@empty\else(\tikzchainprevious)%
edge[every join]#1(\tikzchaincurrent)\fi}}}
\tikzset{>=stealth',every on chain/.append style={join},
         every join/.style={->}}
\tikzset{
    >=stealth',
    punkt/.style={
           rectangle,
           rounded corners,
           draw=black, very thick,
           text width=6.5em,
           minimum height=2em,
           text centered},
    pil/.style={
           ->,
           thick,
           shorten <=2pt,
           shorten >=2pt,}
}
\newtheorem{thm}{Theorem}[section]
\newtheorem{prop}[thm]{Proposition}
\newtheorem{lem}[thm]{Lemma}
\theoremstyle{definition}
\newtheorem{remark}[thm]{Remark}
\newtheorem{definition}[thm]{Definition}
\font\black=cmbx10 \font\sblack=cmbx7 \font\ssblack=cmbx5 \font\blackital=cmmib10  \skewchar\blackital='177
\font\sblackital=cmmib7 \skewchar\sblackital='177 \font\ssblackital=cmmib5 \skewchar\ssblackital='177
\font\sanss=cmss12 \font\ssanss=cmss8 scaled 900 \font\sssanss=cmss8 scaled 600 \font\blackboard=msbm10
\font\sblackboard=msbm7 \font\ssblackboard=msbm5 \font\caligr=eusm10 \font\scaligr=eusm7 \font\sscaligr=eusm5
\font\bsymb=cmsy10 scaled\magstep2
\def\all#1{\setbox0=\hbox{\lower1.5pt\hbox{\bsymb
       \char"38}}\setbox1=\hbox{$_{#1}$} \box0\lower2pt\box1\;}
\def\exi#1{\setbox0=\hbox{\lower1.5pt\hbox{\bsymb \char"39}}
       \setbox1=\hbox{$_{#1}$} \box0\lower2pt\box1\;}
\def\pmb#1{\setbox0\hbox{${#1}$} \copy0 \kern-\wd0 \kern.2pt \box0}
\def\pmbb#1{\setbox0\hbox{${#1}$} \copy0 \kern-\wd0
      \kern.2pt \copy0 \kern-\wd0 \kern.2pt \box0}
\def\pmbbb#1{\setbox0\hbox{${#1}$} \copy0 \kern-\wd0
      \kern.2pt \copy0 \kern-\wd0 \kern.2pt
    \copy0 \kern-\wd0 \kern.2pt \box0}
\def\pmxb#1{\setbox0\hbox{${#1}$} \copy0 \kern-\wd0
      \kern.2pt \copy0 \kern-\wd0 \kern.2pt
      \copy0 \kern-\wd0 \kern.2pt \copy0 \kern-\wd0 \kern.2pt \box0}
\def\pmxbb#1{\setbox0\hbox{${#1}$} \copy0 \kern-\wd0 \kern.2pt
      \copy0 \kern-\wd0 \kern.2pt
      \copy0 \kern-\wd0 \kern.2pt \copy0 \kern-\wd0 \kern.2pt
      \copy0 \kern-\wd0 \kern.2pt \box0}
\mathchardef\za="710B  
\mathchardef\zb="710C  
\mathchardef\zg="710D  
\mathchardef\zd="710E  
\mathchardef\zve="710F 
\mathchardef\zz="7110  
\mathchardef\zh="7111  
\mathchardef\zvy="7112 
\mathchardef\zi="7113  
\mathchardef\zk="7114  
\mathchardef\zl="7115  
\mathchardef\zm="7116  
\mathchardef\zn="7117  
\mathchardef\zx="7118  
\mathchardef\zp="7119  
\mathchardef\zr="711A  
\mathchardef\zs="711B  
\mathchardef\zt="711C  
\mathchardef\zu="711D  
\mathchardef\zvf="711E 
\mathchardef\zq="711F  
\mathchardef\zc="7120  
\mathchardef\zw="7121  
\mathchardef\ze="7122  
\mathchardef\zy="7123  
\mathchardef\zf="7124  
\mathchardef\zvr="7125 
\mathchardef\zvs="7126 
\mathchardef\zf="7127  
\mathchardef\zG="7000  
\mathchardef\zD="7001  
\mathchardef\zY="7002  
\mathchardef\zL="7003  
\mathchardef\zX="7004  
\mathchardef\zP="7005  
\mathchardef\zS="7006  
\mathchardef\zU="7007  
\mathchardef\zF="7008  
\mathchardef\zW="700A  
\newcommand{\be}{\begin{equation}}
\newcommand{\ee}{\end{equation}}
\newcommand{\raa}{\rightarrow}
\newcommand{\bea}{\begin{eqnarray}}
\newcommand{\eea}{\end{eqnarray}}
\newcommand{\beas}{\begin{eqnarray*}}
\newcommand{\eeas}{\end{eqnarray*}}
\def\*{{\textstyle *}}
\newcommand{\w}{\wedge}
\def\cA{{\cal A}}
\def\cE{{\cal E}}
\def\cN{{\cal N}}
\def\cJ{{\cal J}}
\def\cO{{\cal O}}
\def\cM{{\cal M}}
\newcommand{\la}{\langle}
\newcommand{\ra}{\rangle}
\newcommand{\Ci}{C^{\infty}}
\newcommand{\N}{\mathbb{N}}
\newcommand{\Z}{\mathbb{Z}}
\newcommand{\R}{\mathbb{R}}
\newcommand{\lp}{\left(}
\newcommand{\rp}{\right)}
\newcommand{\op}[1]{\!\!\mathop{\rm ~#1}\nolimits}
\newcommand{\id}{\op{id}}
\begin{document}
\title{\bf $\mathbb{Z}_2^n$-Supergeometry II\\ Batchelor-Gawedzki Theorem}
\date{}
\author{Tiffany Covolo, Janusz Grabowski, and Norbert Poncin}

\maketitle

\begin{abstract} {Quite a number of $\Z_2^n$-gradings, $n\ge 2$, appear in Physics and in Mathematics. The corresponding sign rules are given by the `scalar product' of the involved $\Z_2^n$-degrees.  The new theory exhibits challenging differences with the classical one: nonzero degree even coordinates are not nilpotent, and even (resp., odd) coordinates do not necessarily commute (resp., anticommute) pairwise (the parity is the parity of the total degree). Formal series are the appropriate substitute for nilpotency; the category of $\Z_2^\bullet$-manifolds is closed with respect to the tangent and cotangent functors. The $\Z_2^n$-supergeometric viewpoint provides deeper insight and simplified solutions; interesting relations with Quantum Field Theory and Quantum Mechanics are expected. In this article, we introduce split $\Z_2^n$-manifolds as intrinsic superizations of $\Z_2^n\setminus\{0\}$-graded vector bundles and prove that, conversely, any $\Z_2^n$-manifold is noncanonically split. We thus provide a complete proof of the $\Z_2^n$-extension of the so-called Batchelor-Gawedzki Theorem.}
\end{abstract}

\vspace{2mm} \noindent {\bf MSC 2010}: 17A70, 58A50, 13F25, 16L30 \medskip

\noindent{\bf Keywords}: Supersymmetry, supergeometry, superalgebra, higher grading, sign rule, ringed space, higher vector bundle, split supermanifold

\thispagestyle{empty}
\tableofcontents

\section{Introduction}

This paper is the second of a series of articles on $\Z_2^n$-Supergeometry. For examples of $\Z_2^n$-superalgebras and $\Z_2^n$-supermanifolds, for motivations, the discussion of Neklyudova's equivalence \cite{Lei11} and the necessity and sufficiency of $\Z_2^n$-gradings, for a detailed study of $\Z_2^n$-supermanifolds and their morphisms, as well as for expected applications in Physics, we refer the reader to \cite{CGP14}.\medskip

{Let us nevertheless emphasize that $\Z_2^n$-Supergeometry is a `necessary' and `sufficient' generalization. Indeed, if $\la -,-\ra$ denotes the standard `scalar product' of $\Z_2^n$, the considered $\Z_2^n$-commutative algebras are sufficient, in the sense that any (!) sign rule, for any (!) finite number $m$ of parameters $\xi^a$, is of the form \be\label{Z2n-com} \xi^a\xi^b = (-1)^{\la \zs_a\zs_b\ra} \xi^b\xi^a\;,\ee for some $\Z_2^n$-degree $\zs:\{\xi^1,\ldots,\xi^m\}\ni \xi^a\mapsto \zs_a\in\Z_2^n$ and some $n\le 2m$ \cite{CGP14}. On the other hand, such $\Z_2^n$-commutative algebras appear naturally in standard Supergeometry. It suffices to think about the two possible function sheaves of the tangent bundle of a classical supermanifold $\cM$, i.e. about the sheaf $$\zW^\bullet_{\op{BL}}(\cM)=\bigoplus_k\zW^k_{\op{BL}}(\cM)\;$$ $$\label{DeligneDiffForms}(\;\text{resp.,}\quad\;\zW^\bullet_{\op{D}}(\cM)=\bigoplus_k\zW^k_{\op{D}}(\cM)\;)$$ of differential superforms with commutation rule $$\zw \zw'=(-1)^{(\tilde\zw+k)(\tilde\zw'+\ell)}\zw'\zw$$ $$(\;\text{resp., }\;\zw\zw'=(-1)^{\tilde\zw\tilde\zw'+k\ell}\zw'\zw\;)\;,$$ where $\zw$ (resp., $\zw'$) is a form of $\N$-degree $k$ and parity $\tilde\zw$ (resp., of $\N$-degree $\ell$ and parity $\tilde\zw'$).}\medskip

{It is worth recalling that the first choice (which corresponds to a de Rham differential of parity 1) turns the tangent bundle into a classical supermanifold $T[1]\cM$, provided we complete the sheaf $\zW^\bullet_{\op{BL}}(\cM)$ of BL-differential forms by the sheaf $\widehat{\zW}_{\op{BL}}(\cM)$ of pseudodifferential forms (which does in general not carry any $\N$-grading) \cite{Lei11}. For the second choice (de Rham differential of parity 0) the tangent bundle of $\cM$ becomes a $\Z_2^2$-supermanifold $T\cM$, if we consider the completion $$\widehat{\zW}_{\op{D}}(\cM):=\prod_k\zW^k_{\op{D}}(\cM)$$ of the sheaf $\zW^\bullet_{\op{D}}(\cM)$ of $\op{D}$-differential forms. In the first case, local coordinates $$(x^i,\xi^a,\dot{x}^j\simeq dx^j,\dot{\xi}^b\simeq d\xi^b)$$ have the parities $$(0,1,1,0)\;,$$ whereas they have the bidegrees $$((0,0),(0,1),(1,0),(1,1))$$ in the second. Locally, the completed algebra of superfunctions of the supermanifold $T[1]\cM$ is of the form \be\label{Compl}\Ci(U)[\xi^a,dx^j]\;,\ee where $U$ is an open subset of a Euclidean space with coordinates $(x^i,d\xi^b)$; on the other hand, the completed algebra of $\Z_2^2$-functions of the $\Z_2^2$-manifold $T\cM$ is locally of the type \be\label{Ser}\Ci(V)[[\xi^a,dx^j,d\xi^b]]\;,\ee i.e., it is made of the formal power series -- in the considered $\Z_2^2$-commutative (see (\ref{Z2n-com})) indeterminates of non-zero degree -- with coefficients in the smooth functions of an open subset $V$ of a Euclidean space with coordinates $x^i$. A further discussion and additional motivations of these different viewpoints (\ref{Compl}) and (\ref{Ser}), can be found in \cite{CGP14}, Section 1, Paragraph 4, and Section 3.2.}\medskip

The present paper on the $\Z_2^n$-extension of the Batchelor-Gawedzki Theorem is organized as follows.\medskip

For the reader's convenience, we briefly summarize in Section 2 the main results of \cite{CGP14} that we use in this text.

In Section 3, we recall that any classical supermanifold is noncanonically the superization $$\zP E=E[1]$$ of a vector bundle $E$ ($[1]$ means that parity $1$ is assigned to all fiber coordinates), and that any $\N$-manifold of order $n\ge 1$ is noncanonically the superization \be\label{NMfdsGVB}\zP E=\bigoplus_{i=1}^nE_{-i}[i]\ee of a graded vector bundle $E=\bigoplus_{i=1}^nE_{-i}$ concentrated in degrees $-1,\ldots,-n$ (again $[i]$ means that the degree of the fiber coordinates is $i$). We then explain that the superization $$\zP E=\bigoplus_{\zs_i\in\Z_2^n\setminus\{0\}}E_{\zs_i}[\zs_i]$$ of a $\Z_2^n\setminus\{0\}$-graded vector bundle $E$ leads to a $\Z_2^n$-supermanifold with function sheaf $$\cA(\zP E)=\prod_{k\ge 0}\zG(\odot^k(\zP E)^*)\;,$$ where $\odot$ denotes the $\Z_2^n$-graded symmetric tensor product (the passage to opposite degrees, see (\ref{NMfdsGVB}), is redundant here since the degrees are considered `modulo 2'). Moreover, we stress the difference between the {\it split} $\Z_2^n$-supermanifolds $\zP E$ and the $\Z_2^n$-supermanifolds obtained via superization of $n$-vector bundles, see \cite{CGP14}, Example 5.4. The latter are examples of usually not canonically split $\Z_2^n$-supermanifolds.

The main result of this work, the $\Z_2^n$-Batchelor-Gawedzki Theorem, can be found in Section 4: any smooth $\Z_2^n$-supermanifold is noncanonically split.\smallskip

{In the literature, this type of results is usually referred to as Batchelor theorems. Indeed, for $n=1$, a proof of the preceding statement can be found in \cite{Bat1}, \cite{Bat2}. However, a variant of this splitting theorem for smooth $\Z_2$-supermanifolds had already been proved a bit earlier by Gawedzki \cite{Gaw77}. Moreover, D. Leites informed us that A. A. Kirillov and A. N. Rudakov convinced themselves independently of the correctness of this claim while using an elevator at Moscow State University. It is known that the Batchelor-Gawedzki theorem for $\Z_2$-supermanifolds holds not only in the smooth, but also in the real analytic category \cite{Vis11}, \cite{Vis14}. The situation is different for complex analytic $\Z_2$-supermanifolds: there exist holomorphic $\Z_2$-supermanifolds whose structure sheaf is NOT isomorphic to the sheaf of sections of a bundle of exterior algebras \cite{Gre82}.\smallskip}

Let us come back to the $\Z_2^n$-Batchelor-Gawedzki Theorem. Its proof contains 3 steps.

1. For any $\Z_2^n$-supermanifold $\cM=(M,\cA)$, we have a short exact sequence of sheaves 
\be\label{BaSeq} 0\to\cJ\to\cA\to\Ci\to 0\;,\ee 
and $\cJ/\cJ^2\simeq\zG((\zP E)^*)$, where $E$ is a $\Z_2^n\setminus\{0\}$-graded vector bundle. The sheaves $\cA(\zP E)$ and $\cA=\cA(\cM)$ are locally isomorphic and their global difference comes from a cohomological invariant.

2. To build a sheaf morphism $\cA(\zP E)\to \cA$, we need a projection $\cM\to M$, or, more precisely, a splitting $\zf$ of (\ref{BaSeq}). The construction of the (noncanonical) latter is the most challenging part of the proof.

3. This embedding $\zf:\Ci\to \cA$ implies that $$0\to\cJ^2\to\cJ\to\cJ/\cJ^2\to 0$$ is a short exact sequence of sheaves of $\Ci$-modules. Although $\cJ$ and $\cJ^2$ are not locally free, we can obtain a splitting $$\Phi:\cJ/\cJ^2\simeq \zG((\zP E)^*)\to \cJ\subset\cA$$ and show that it extends to an `algebra' morphism $$\Phi:\cA(\zP E)=\prod_{k\ge 0}\zG(\odot^k (\zP E)^*)\to \cA\;.$$

Let us finally provide a non-exhaustive list of references on classical supermanifolds and related topics that were of importance for the present text: \cite{DAL}, \cite{Lei11}, \cite{Var}, \cite{Man}, \cite{DM99}, \cite{CCF}, \cite{DSB}, \cite{Vor12}, \cite{CR12}, \cite{BP12}, \cite{GKP1}, \cite{GKP2}.

\section{Preliminaries}

In principle, in the following we freely use notation, definitions, and the results of \cite{CGP14}. For the convenience of the reader, we nevertheless recall some definitions and propositions in the present section. For explanations on adic topologies, ringed spaces and their morphisms, sheaves, as well as partitions of unity, we refer the reader to the Appendix of \cite{CGP14}.

\begin{definition} A \emph{locally $\Z_2^n$-ringed space} ({\small LZRS}), $n\in\N\setminus\{0\}$, is a pair $(M, \cA_M)$ made of a second-countable Hausdorff space $M$ and a sheaf $\cA_M$ of $\Z_2^n$-graded $\Z_2^n$-commutative associative unital $\R$-algebras, such that the stalks $\cA_{m}$, $m\in M$, be local rings.\end{definition}

In this definition, \emph{$\Z_2^n$-commutative} means that two sections $s,t\in\cA_M(U)$, $U\subset M$ open, of $\Z_2^n$-degree $\tilde s,\tilde t,$ commute according to the sign rule \be ts=(-1)^{\la \tilde s,\tilde t\ra}st\;,\ee where $\la-,-\ra$ is the standard `scalar product' of $\Z_2^n.$

\begin{definition}\label{DefZSupMan} Let $n\in\N\setminus\{0\}$, $p\in\N$, and $\mathbf{q}=(q_1,\ldots,q_{2^n-1})\in \N^{2^n-1}$. A smooth \emph{$\Z_2^n$-supermanifold} $\cM$ of dimension $p|\mathbf{q}$ is a {\small (L)ZRS} $\cM=(M,\cA_M)$ that is locally isomorphic to a $\Z_2^n$-superdomain ${\cal U}^{p|\mathbf{q}}$. By $\Z_2^n$-superdomain we mean a {\small LZRS} of the type $${\cal U}^{p|\mathbf{q}}=(U,\Ci_U[[\xi^1,\ldots,\xi^q]])\;,$$ where $U\subset \R^p$ is open, the brackets $[[\ldots]]$ denote formal power series, where $q=|\mathbf{q}|:=\sum_iq_i$, and where $\xi^1,\ldots,\xi^q$ are formal variables of which $q_k$ have the $k$-th degree in $\Z_2^n\setminus\{0\}$, $0=(0,\ldots,0)$, endowed with the lexicographical order. \end{definition}

Note that the sections in $\cO_U(V):=\Ci_U(V)[[\xi^1,\ldots,\xi^q]]$, $V$ open in $U$, are the formal series $$\sum_{|\zm|\ge0}f_\zm(x)\xi^\zm=\sum_{\zm_1+\ldots+\zm_q\ge 0}f_{\zm_1\ldots\zm_q}(x)\,(\xi^1)^{\zm_1}\ldots(\xi^q)^{\zm_q}\;.$$ Here $\zm$ is a multi-index $\zm\in\N^q$, with $\zm_a\in\{0,1\}$ if $\xi^a$ is nilpotent, and $x=(x^1,\ldots,x^p)$ are the coordinates in $V$.

\begin{prop} The topological base space $M$ of a smooth $\Z_2^n$-supermanifold $\cM=(M,\cA_M)$ of dimension $p|\mathbf{q}$ carries a classical smooth manifold structure of dimension $p$, and there exists a short exact sequence of sheaves 
$$\label{BasicSeq} 0\to {\cal J}_M\to \cA_M\stackrel{\ze}{\to }\Ci_M\to 0\;.$$\end{prop}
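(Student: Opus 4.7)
My overall plan is to extract both claims---the smooth manifold structure on $M$ and the existence of $\ze$---directly from the local model, taking as input the morphism theory of $\Z_2^n$-superdomains from \cite{CGP14}. First I would endow $M$ with a smooth atlas. By definition of a $\Z_2^n$-supermanifold, $M$ is covered by opens $U_\alpha$ admitting LZRS isomorphisms $(U_\alpha,\cA_M|_{U_\alpha})\cong(V_\alpha,\Ci_{V_\alpha}[[\xi^1,\ldots,\xi^q]])$ with $V_\alpha\subset\R^p$ open; the underlying base maps $\varphi_\alpha:U_\alpha\to V_\alpha$ are homeomorphisms, so together with the second-countable Hausdorff property built into the definition of an LZRS this produces a topological atlas of dimension $p$. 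The transitions $\varphi_\beta\circ\varphi_\alpha^{-1}$ are the base components of isomorphisms of $\Z_2^n$-superdomains, and I intend to invoke the morphism-theoretic result of \cite{CGP14} to conclude that they are smooth.

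Next I would construct $\ze$ stalkwise, via the residue field of the local ring $\cA_m$. In a chart, $\cA_m$ identifies with $\Ci_{\R^p,\varphi_\alpha(m)}[[\xi^1,\ldots,\xi^q]]$; its unique maximal ideal is $\{\sum_\mu f_\mu\xi^\mu : f_0(\varphi_\alpha(m))=0\}$, and the residue field is canonically $\R$. This yields a point evaluation $\mathrm{ev}_m:\cA_m\to\R$ which, because morphisms of LZRS preserve maximal ideals (locality), does not depend on the chosen chart. For any open $V\subset M$ and $s\in\cA_M(V)$ I would set $\ze_V(s)(m):=\mathrm{ev}_m(s_m)$; reading this off in any chart gives $\ze_V(s)=f_0$, which is a smooth function on $V$. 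These local data assemble into a morphism of sheaves of $\R$-algebras $\ze:\cA_M\to\Ci_M$. Taking $\cJ_M:=\ker\ze$, one sees in a chart that $\cJ_M|_V$ is the ideal $(\xi^1,\ldots,\xi^q)\cdot\cA_M|_V$ of formal series with vanishing constant term, while surjectivity of $\ze$ is immediate at stalks---any smooth germ lifts to itself, viewed as a constant-in-$\xi$ formal series---hence holds as a sheaf morphism. This gives the exact sequence $0\to\cJ_M\to\cA_M\xra{\ze}\Ci_M\to 0$.

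The principal obstacle, and the point where the $\Z_2^n$-setting departs from the classical one, is the smoothness of the transitions $\varphi_\beta\circ\varphi_\alpha^{-1}$: because several of the generators $\xi^a$ are of nonzero degree but not nilpotent, the sheaves on a superdomain are genuine formal power series, and the base map of a morphism is not recovered by a naive nilpotent quotient. One needs the structural fact from \cite{CGP14} that the pullback of a base coordinate $x'^i$ under an isomorphism of superdomains is a $\Z_2^n$-degree-zero section whose constant-in-$\xi$ component is smooth, and that these components fit together into a smooth map of the underlying open subsets. Once this input is granted, the construction of $\ze$ and the exactness of the sequence are purely algebraic and local and require no further work.
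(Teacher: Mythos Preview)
The paper does not supply its own proof of this proposition: it appears in Section~2 (Preliminaries), which explicitly recalls definitions and results from \cite{CGP14} without reproving them. There is therefore no in-paper argument to compare against.

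Your proposal is correct and follows the natural line. The smooth atlas on $M$ arises exactly as you describe, and you have correctly isolated the one nontrivial point: smoothness of the transitions $\varphi_\beta\circ\varphi_\alpha^{-1}$ requires the morphism theory of $\Z_2^n$-superdomains from \cite{CGP14}, namely that for a morphism $(\psi,\psi^*)$ between superdomains the constant-in-$\xi$ part of $\psi^*(x^i)$ is smooth and coincides with $x^i\circ\psi$. Your intrinsic construction of $\ze$ via the canonical identification $\cA_m/\frak{m}_m\cong\R$ is sound; chart-independence of $\mathrm{ev}_m$ holds because $\frak{m}_m$ is intrinsic, and locality of LZRS morphisms is exactly what guarantees that the two chart descriptions of $\frak{m}_m$ agree. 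A slightly shorter (but equivalent) route is to define $\ze$ directly over chart domains as the constant-term projection $\sum_\zm f_\zm\xi^\zm\mapsto f_0$ and then use the same locality argument to show these local maps glue; your stalkwise formulation simply makes the gluing automatic. The identification of $\cJ_M$ with the ideal of series with vanishing constant term and the surjectivity of $\ze$ on stalks are, as you say, immediate in a chart.
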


\begin{prop} The function sheaf $\cA_M$ of a $\Z_2^n$-supermanifold $\cM=(M,\cA_M)$ is Hausdorff-complete with respect to the $\cJ_M$-adic topology: \be\cA_M=\varprojlim_k\cA_M/\cJ_M^k\;.\ee This property also holds for the algebras of sections $\cA(U)$ and their ideals $\cJ(U)$, $U$ open in $M$. \end{prop}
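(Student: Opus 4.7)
The plan is to establish the assertion first in the local model of a $\Z_2^n$-superdomain, where formal power series make the statement essentially tautological, and then to globalize it via a sheaf-gluing argument that relies on partitions of unity on the paracompact base $M$.

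\textbf{Local case.} On a superdomain $\cU^{p|\mathbf{q}}=(U,\cO_U)$ with $\cO_U(V)=\Ci(V)[[\xi^1,\ldots,\xi^q]]$, the augmentation $\ze|_V$ reads off the constant-in-$\xi$ term, so $\cJ_U(V)$ is the ideal generated by the formal variables $\xi^a$. A direct inspection shows that $\cJ_U(V)^k$ consists precisely of those formal series whose total $\xi$-length is at least $k$: any $k$-fold product of elements with vanishing $\xi^0$-coefficient has $\xi$-length $\geq k$, and conversely every series of $\xi$-length $\geq k$ is obtained as a finite combination of $k$-fold monomials in the $\xi^a$'s with smooth coefficients. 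Consequently $\cO_U(V)/\cJ_U(V)^k$ is the truncation to $\xi$-length $<k$, whose inverse limit over $k$ recovers $\Ci(V)[[\xi]]=\cO_U(V)$. Both Hausdorffness ($\bigcap_k\cJ_U(V)^k=0$) and completeness are thus immediate.

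\textbf{Globalization.} For a general $\cM=(M,\cA_M)$ and $U\subset M$ open, choose a locally finite cover $\{V_i\}$ of $U$ by coordinate charts isomorphic to superdomains. Since the restriction morphism $\cA(U)\to\cA(V_i)$ is a ring homomorphism sending $\cJ(U)$ into $\cJ(V_i)$, we have $\cJ(U)^k|_{V_i}\subset\cJ(V_i)^k$. Hausdorffness of $\cA(U)$ follows at once: any $s\in\bigcap_k\cJ(U)^k$ satisfies $s|_{V_i}\in\bigcap_k\cJ(V_i)^k=0$ by the local case, whence $s=0$ by the sheaf axiom. For completeness, given $(s_k)\in\varprojlim_k\cA(U)/\cJ(U)^k$, restrict to each $V_i$ and apply the local case to obtain a unique lift $s^{(i)}\in\cA(V_i)$. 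On each overlap $V_i\cap V_j$ the two local lifts agree modulo every $\cJ(V_i\cap V_j)^k$, hence coincide by local Hausdorffness; they glue to a global $s\in\cA(U)$.

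\textbf{Main obstacle.} The subtle step is to verify that $s-s_k\in\cJ(U)^k$ as a \emph{global} algebraic condition, rather than merely as a family of local congruences $s|_{V_i}-s_k|_{V_i}\in\cJ(V_i)^k$. Here I would invoke a smooth partition of unity $\{\chi_i\}$ subordinate to $\{V_i\}$, together with auxiliary cut-off functions $\psi_i$ taking the value $1$ on $\mathrm{supp}(\chi_i)$ and supported in $V_i$. On each $V_i$, the product $\chi_i(s-s_k)$ expands as a finite sum of terms $(\chi_i g_1)(\psi_i g_2)\cdots(\psi_i g_k)$ with $g_j\in\cJ(V_i)$: each cut-off factor vanishes outside $V_i$, so every factor extends by zero to an element of $\cJ(U)$, placing $\chi_i(s-s_k)$ in $\cJ(U)^k$. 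Assembling these contributions over the locally finite family $\{\chi_i\}$ yields $s-s_k\in\cJ(U)^k$, and the sheaf-level identity $\cA_M=\varprojlim_k\cA_M/\cJ_M^k$ follows by running the same argument chart-wise, using that both sides are sheaves which coincide on every coordinate superdomain.
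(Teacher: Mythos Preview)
The paper does not actually prove this proposition: it appears in Section~2, \emph{Preliminaries}, as one of several results recalled without proof from \cite{CGP14}. So there is no in-paper argument to compare against, only the surrounding remark that ``due to the existence of partitions of unity for $\Z_2^n$-supermanifolds, the presheaves $\cA_M/\cJ_M^k$ are in fact sheaves.'' Your strategy---verify the claim on superdomains, then globalize by a sheaf/partition-of-unity argument---is exactly the standard route, and your treatment of the local case and of Hausdorffness is clean.

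There is, however, a genuine gap in your ``main obstacle'' paragraph. You correctly show that each $\chi_i(s-s_k)$, extended by zero, lies in the \emph{algebraic} ideal power $\cJ(U)^k$. But the passage ``assembling these contributions over the locally finite family $\{\chi_i\}$ yields $s-s_k\in\cJ(U)^k$'' is not justified: $\cJ(U)^k$ consists by definition of \emph{finite} sums of $k$-fold products, and a locally finite (possibly infinite) sum of such elements is not automatically of this form. What you are implicitly using is precisely the nontrivial fact---flagged separately by the paper---that the presheaf $U\mapsto\cJ(U)^k$ is already a sheaf, i.e.\ that local membership in $\cJ^k$ implies global membership. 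Your argument is circular at this point: it invokes the very statement it is meant to establish.

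The gap can be closed, but it takes more than a single sentence. One way: refine the cover using the finite covering dimension of $M$ so that it decomposes into $p+1$ subfamilies with pairwise \emph{disjoint} members; on each chart $\cJ(V_i)^k$ is generated over $\cA(V_i)$ by the \emph{uniformly finite} set of monomials $\xi^\mu$ with $|\mu|=k$; then for a subfamily with disjoint supports one may sum the factors first (since cross-terms vanish) to exhibit the total as a finite sum of $k$-fold products in $\cJ(U)$. Alternatively, one can run an induction on $k$, using that $\cJ$ itself is a sheaf and that on each chart $\cJ(V_i)^{k+1}=\sum_a\xi^a_{(i)}\cdot\cJ(V_i)^k$ with a fixed finite index set. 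Either way, some such device is needed; the bare partition-of-unity argument you wrote does not quite land.
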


Let us also recall that, due to the existence of partitions of unity for $\Z_2^n$-supermanifolds, the presheaves $\cA_M/\cJ_M^k$, $k\ge 1$, are in fact sheaves.

\begin{prop} For any $\Z_2^n$-supermanifold ${\cal M} =(M,\cA_M)$ and any point $m\in M$, the unique maximal homogeneous ideal ${\frak m}_m$ of the stalk $\cA_m$ is given by \be\label{MaxIdea}{\frak m}_m=\{[f]_m:(\ze f)(m)=0\}\;.\ee\end{prop}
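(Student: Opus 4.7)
The plan is to show that $I_m:=\{[f]_m:(\ze f)(m)=0\}$ coincides with the unique maximal homogeneous ideal $\mathfrak{m}_m$ of the local ring $\cA_m$. First, I would verify that $I_m$ is a well-defined homogeneous ideal of $\cA_m$. The assignment $[f]_m\mapsto (\ze f)(m)$ is a well-defined $\R$-algebra morphism $\cA_m\to\R$ (the sheaf map $\ze$ and evaluation at $m$ are both ring morphisms, and the result depends only on the germ), so its kernel $I_m$ is an ideal. Homogeneity follows because $\ze$ annihilates every section of nonzero $\Z_2^n$-degree (such sections lie in $\cJ_M$): decomposing $[f]_m=\sum_\za[f_\za]_m$ into homogeneous components, the condition $(\ze f)(m)=0$ reduces to $(\ze f_0)(m)=0$, so $I_m$ automatically contains all components of nonzero degree.

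The heart of the argument is to show that every element $[f]_m\notin I_m$ is invertible in $\cA_m$. I would represent $[f]_m$ by a section $f\in\cA_M(U)$ on some open neighborhood $U$ of $m$ and write $f=\ze(f)+g$ with $g\in\cJ_M(U)$ (using that locally $\cA_M(U)\simeq\Ci_M(U)[[\xi^1,\ldots,\xi^q]]$, so $\ze(f)$ has a canonical lift as the constant term of the series). Since $(\ze f)(m)\ne 0$ and $\ze(f)$ is a smooth function on $U$, continuity gives an open $V\subset U$ containing $m$ on which $\ze(f)$ is nowhere vanishing, hence invertible in $\Ci_M(V)\hookrightarrow\cA_M(V)$. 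Factoring $f=\ze(f)(1+h)$ with $h=\ze(f)^{-1}g\in\cJ_M(V)$, I would invert $1+h$ via the geometric series $\sum_{k\ge 0}(-h)^k$: its partial sums form a Cauchy sequence in the $\cJ_M(V)$-adic topology, and by the Hausdorff-completeness of $\cA_M(V)$ (previous Proposition) the series converges to a genuine inverse of $1+h$ inside $\cA_M(V)$. Passing to germs at $m$ then proves that $[f]_m$ is a unit.

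To conclude, since $\cA_m$ is local its unique maximal ideal coincides with the set of non-units, so the previous step gives $\mathfrak{m}_m\subseteq I_m$. The reverse inclusion is immediate: if $[f]_m[g]_m=1$, then applying the ring morphism $\ze$ and evaluating at $m$ yields $(\ze f)(m)\,(\ze g)(m)=1$, forcing $(\ze f)(m)\ne 0$; thus no element of $I_m$ is a unit and $I_m=\mathfrak{m}_m$. The main technical obstacle is the invertibility step, which crucially depends on the Hausdorff-completeness in the $\cJ_M$-adic topology; this is precisely the feature that allows the nonzero-degree (possibly non-nilpotent) formal coordinates of a $\Z_2^n$-supermanifold to play the role that nilpotents play in ordinary supergeometry.
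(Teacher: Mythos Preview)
Your proof is correct and follows the standard route (show that the evaluation map $\cA_m\to\R$, $[f]_m\mapsto(\ze f)(m)$, is a surjective ring morphism whose kernel consists precisely of the non-units, the invertibility of $1+h$ with $h\in\cJ$ being supplied by Hausdorff-completeness). Note, however, that the present paper does not actually prove this proposition: it appears in Section~2 among the preliminaries recalled from \cite{CGP14}, so there is no proof here to compare against. Your argument is essentially the one one would expect to find in \cite{CGP14}.

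One small remark: in your final paragraph you invoke ``since $\cA_m$ is local'' to deduce $\mathfrak{m}_m\subseteq I_m$, but in fact your preceding work already shows $I_m$ equals the set of non-units, which \emph{by itself} forces $I_m$ to be the unique maximal ideal (and, being homogeneous, the unique maximal homogeneous ideal); no appeal to locality is needed, and indeed this is how one would verify the locality axiom in the first place.
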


When taking an interest in the stalks $\cA_m$ of the function sheaf of a $\Z_2^n$-supermanifold $(M,\cA_M)$ of dimension $p|\mathbf{q}$, we can choose a centered chart $(x,\xi)=(x^1,\ldots,x^p,\xi^1,\ldots,\xi^q)$ around $m$ and work in a $\Z_2^n$-superdomain ${\cal U}^{p|\mathbf{q}}$ associated with a convex open subset $U\subset \R^p,$ in which $m\simeq x=0$. In view of (\ref{MaxIdea}), a Taylor expansion (with remainder) around $m\simeq x=0$ of the coordinate form of $\ze f$ shows that $${\frak m}_m\simeq \{[f]_0: f(x,\xi) = 0(x) + \sum_{|\zm|>0}f_\zm(x)\xi^\zm \}\;,$$ where $0(x)$ are terms of degree 1 at least in $x$.

\begin{prop}\label{Claim4} For any $m\in M$, the basis ${\frak m}_m^{k+1}$ $(k\ge 0)$ of neighborhoods of $\,0$ in the ${\frak m}_m$-adic topology of $\cA_m$ is given by 
$$
{\frak m}_m^{k+1}=\{[f]_0: f(x,\xi) = \sum_{0\le|\zm|\le k}0_\zm(x^{k-|\zm|+1})\xi^\zm+\sum_{|\zm|>k}f_\zm(x)\xi^\zm\}\;,\label{lc2}
$$ 
where notation is the same as above. \end{prop}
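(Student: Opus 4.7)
The plan is to identify $\mathfrak{m}_m$ concretely as an ideal of $\cA_m$ and then expand its powers. Fixing a centered chart $(U,x^i,\xi^a)$ around $m\simeq 0$ with $U$ convex, Proposition 2.4 combined with Hadamard's lemma shows that $\mathfrak{m}_m$ is simply the ideal $(x^1,\dots,x^p,\xi^1,\dots,\xi^q)$ of $\cA_m$ generated by the coordinate germs: for any $[f]_0\in\mathfrak{m}_m$ the $\xi$-independent piece $f_0(x)$ vanishes at $0$ and equals $\sum_i x^i g_i(x)$ smoothly on $U$, while every $\xi^\mu$ with $|\mu|>0$ factors as $\xi^a\cdot\xi^{\mu-e_a}$ once an index $a$ with $\mu_a\ge 1$ is chosen. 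Consequently
$$\mathfrak{m}_m^{k+1}=\sum_{i+j=k+1}(x)^i(\xi)^j,$$
where $(x)$ and $(\xi)$ denote the ideals of $\cA_m$ generated by the respective coordinate germs.

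I would then describe the two factors explicitly. Iterating Hadamard's lemma on $U$ gives $(x)^i=\{\sum_\mu c_\mu(x)\xi^\mu:c_\mu\text{ vanishes to order }\ge i\text{ at }0\}$. For the $\xi$-side I claim
$$(\xi)^j=\Bigl\{\sum_{|\lambda|\ge j}c_\lambda(x)\xi^\lambda\Bigr\},$$
the inclusion $\supseteq$ being the non-obvious one: for each $\lambda$ with $|\lambda|\ge j$ one fixes a splitting $\lambda=\mu(\lambda)+\nu(\lambda)$ with $|\mu(\lambda)|=j$ and regroups
$$\sum_{|\lambda|\ge j}c_\lambda(x)\xi^\lambda=\sum_{|\mu|=j}\xi^\mu\Bigl(\sum_{\mu(\lambda)=\mu}\pm c_\lambda(x)\xi^{\nu(\lambda)}\Bigr);$$
since $\{\mu:|\mu|=j\}$ is finite, this exhibits the series as a genuine finite $\cA_m$-combination of the generators $\xi^\mu$, $|\mu|=j$, of $(\xi)^j$. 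Combining the two descriptions, an element of $(x)^i(\xi)^j$ is a formal series $\sum_{|\lambda|\ge j}c_\lambda(x)\xi^\lambda$ in which each $c_\lambda$ vanishes to order $\ge i$ at $0$.

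The claimed equality is then bookkeeping. For $\subseteq$: any element of $\mathfrak{m}_m^{k+1}$ is a finite sum of pieces from the various $(x)^i(\xi)^j$ with $i+j=k+1$; the $\xi^\lambda$-coefficient of such a piece vanishes to order $\ge i=k+1-j\ge k+1-|\lambda|$ whenever $|\lambda|\ge j$ and is zero otherwise, which for $|\lambda|\le k$ forces order $\ge k+1-|\lambda|$ and for $|\lambda|>k$ leaves no constraint, exactly as stated. For $\supseteq$: given $f$ of the stated form, the finite sum $\sum_{|\mu|\le k}f_\mu\xi^\mu$ lies in $\sum_{|\mu|\le k}(x)^{k+1-|\mu|}(\xi)^{|\mu|}\subseteq\mathfrak{m}_m^{k+1}$, while the tail $\sum_{|\mu|>k}f_\mu\xi^\mu$ belongs to $(\xi)^{k+1}\subseteq\mathfrak{m}_m^{k+1}$ by the description of $(\xi)^{k+1}$ above. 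The only real technical hurdle is that last description of $(\xi)^j$: algebraically it is generated by finite products of coordinates, whereas $\cA_m$ accommodates arbitrary formal series in the $\xi^a$, so one has to check that no elements are missed upon closing under infinite sums in $\xi$ — this is precisely where the finiteness of $\{\mu:|\mu|=j\}$ (and hence of the set of formal variables) becomes essential.
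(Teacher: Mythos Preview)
Your argument is correct. The paper does not actually supply a proof of this proposition: it appears in Section~2 (Preliminaries), where results are \emph{recalled} from \cite{CGP14} without proof, so there is no ``paper's proof'' to compare against.

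Your approach---identifying $\mathfrak{m}_m$ with the ideal $(x^1,\dots,x^p,\xi^1,\dots,\xi^q)$, expanding $\mathfrak{m}_m^{k+1}=\sum_{i+j=k+1}(x)^i(\xi)^j$ via $\Z_2^n$-commutativity, and then describing $(x)^i$ and $(\xi)^j$ explicitly---is sound. The one genuinely delicate point, which you correctly isolate, is showing that an arbitrary formal tail $\sum_{|\lambda|\ge j}c_\lambda(x)\xi^\lambda$ lies in the \emph{finitely generated} ideal $(\xi)^j$; your regrouping over the finite set $\{\mu:|\mu|=j\}$ handles this. Two small remarks: (i) the identity $((x)+(\xi))^{k+1}=\sum_{i+j=k+1}(x)^i(\xi)^j$ relies on $IJ=JI$ for ideals in a $\Z_2^n$-commutative ring, which holds since $ab=\pm ba$---worth stating explicitly; (ii) for the description of $(x)^i$ you implicitly need that the Hadamard remainders $h_{\alpha,\mu}$ can be chosen smooth on a \emph{common} convex neighborhood (so that $\sum_\mu h_{\alpha,\mu}\xi^\mu$ is a genuine element of $\cA_m$), which is guaranteed by the integral form of the Taylor remainder on $U$.
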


A morphism of $\Z_2^n$-supermanifolds or $\Z_2^n$-morphism is a morphism between the underlying locally $\Z_2^n$-ringed spaces.

\begin{prop}\label{Claim1} Any $\Z_2^n$-morphism $\Psi=(\psi,\psi^*):\cM=(M,\cA_M)\to \cN=(N,{\cal B}_N)$ is continuous with respect to $\cal J$ and $\frak m$, i.e., for any open $V\subset N$ and any $m\in M$, we have
$$ \psi^{*}_V\lp \cJ_N(V) \rp \subset \cJ_M({\psi^{-1}(V)})\;\;\text{and}\;\; \psi^{*}_{m}\lp \frak{m}_{\psi(m)}\rp \subset \frak{m}_{m}\;.\label{stalkcond}$$
\end{prop}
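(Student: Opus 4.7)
The plan is to derive the two inclusions from a single pointwise description of $\cJ$: for any open $V\subset N$ and any $s\in{\cal B}_N(V)$,
$$s\in\cJ_N(V)\quad\Longleftrightarrow\quad [s]_n\in{\frak m}_n\ \text{for every}\ n\in V.$$
This bridge is immediate from $\cJ_N=\ker\ze_N$ together with the characterization (\ref{MaxIdea}) of the maximal ideal: $s\in\cJ_N(V)$ amounts to $(\ze_N s)(n)=0$ for every $n\in V$, which is exactly the germwise condition $[s]_n\in{\frak m}_n$.

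The stalk inclusion $\psi^*_m({\frak m}_{\psi(m)})\subset{\frak m}_m$ is essentially built into the definition of a morphism of locally $\Z_2^n$-ringed spaces, since the induced stalk map $\psi^*_m:{\cal B}_{\psi(m)}\to\cA_m$ is required to be a homomorphism of local rings. The section-level inclusion is then a formal consequence of the bridge: given $s\in\cJ_N(V)$ and $m\in\psi^{-1}(V)$, the sheaf-morphism compatibility $[\psi^*_V s]_m=\psi^*_m([s]_{\psi(m)})$, combined with $[s]_{\psi(m)}\in{\frak m}_{\psi(m)}$ coming from the bridge and the stalk inclusion, yields $[\psi^*_V s]_m\in{\frak m}_m$ for every $m\in\psi^{-1}(V)$; one more application of the bridge, this time on $\cM$, gives $\psi^*_V s\in\cJ_M(\psi^{-1}(V))$.

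I do not anticipate a serious obstacle. The most delicate point is verifying that the convention adopted for morphisms of ringed spaces in \cite{CGP14} indeed incorporates the locality of the stalk maps; if it does not, one must derive locality separately, exploiting that $\cA_m/{\frak m}_m\simeq\R$ canonically via $\ze$ and that $\psi^*_m$ is a unital $\R$-algebra homomorphism, which forces the induced map between residue fields to be the identity.
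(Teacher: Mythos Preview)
The paper does not give its own proof of this proposition: it is listed in Section~2 among the preliminaries recalled from \cite{CGP14}, with no argument supplied here. So there is nothing in the present paper to compare your proposal against.

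That said, your argument is correct. The bridge $s\in\cJ_N(V)\Leftrightarrow[s]_n\in{\frak m}_n$ for all $n\in V$ follows immediately from (\ref{MaxIdea}), and deducing the section-level inclusion from the stalk-level one via this bridge is clean. Your caveat about the locality convention is well placed; your fallback sketch also works, though as written it is phrased slightly circularly (speaking of ``the induced map between residue fields'' presupposes the very inclusion you want). The non-circular version is: the composite $\cB_{\psi(m)}\xrightarrow{\psi^*_m}\cA_m\to\cA_m/{\frak m}_m\simeq\R$ is a graded unital $\R$-algebra map onto $\R$, so its kernel is a maximal homogeneous ideal of $\cB_{\psi(m)}$, hence equals ${\frak m}_{\psi(m)}$ by uniqueness, giving $\psi^*_m({\frak m}_{\psi(m)})\subset{\frak m}_m$.
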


\begin{thm}\label{Claim3}
Let $m\in M$ be a base point of a $\Z_2^n$-supermanifold $\cM=(M,\cA_M)$ and let $f\in\cA_M(U)$ be a $\Z_2^n$-function defined in a neighborhood $U$ of $m$. For any fixed degree of approximation $k\in\N\setminus\{0\}$, there exists a polynomial $P=P(x,\xi)$ such that $$[f]_{m}-[P]_{m} \in \frak{m}_{m}^{k}\;.$$
\end{thm}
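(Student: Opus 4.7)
The plan is to reduce everything to the classical Taylor theorem with remainder, applied coefficient-by-coefficient in the $\xi$-expansion. First I would pass to a centered chart $(x,\xi)=(x^1,\ldots,x^p,\xi^1,\ldots,\xi^q)$ around $m$, as described just before Proposition \ref{Claim4}. In this chart we may assume $f$ is the local form of a section of $\Ci_U[[\xi^1,\ldots,\xi^q]]$ on a convex neighborhood of $m\simeq 0$, so that
$$f(x,\xi)=\sum_{\zm\in\N^q}f_\zm(x)\,\xi^\zm,$$
with $\zm_a\in\{0,1\}$ whenever $\xi^a$ is nilpotent and $f_\zm\in\Ci(U)$.

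Next I would observe that the index set $\{\zm:|\zm|<k\}$ is \emph{finite}: each component satisfies $\zm_a\le|\zm|<k$, so the set has cardinality at most $k^q$. This is the point that makes the whole argument work, because a genuine polynomial is a finite sum. For each such $\zm$, I would apply the classical smooth Taylor theorem with integral (or Peano) remainder to the scalar smooth function $f_\zm$ on the convex open set $U$, producing a polynomial $P_\zm(x)$ of degree at most $k-|\zm|-1$ such that $f_\zm(x)-P_\zm(x)$ vanishes to order $k-|\zm|$ at $x=0$, i.e.\ is of the form $0_\zm(x^{k-|\zm|})$ in the notation of Proposition \ref{Claim4}.

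I would then set
$$P(x,\xi)\;:=\;\sum_{|\zm|<k}P_\zm(x)\,\xi^\zm,$$
which is a polynomial in $(x,\xi)$ because the outer sum is finite and each $P_\zm$ is an ordinary polynomial in $x$. The remaining task is to verify the membership $[f]_m-[P]_m\in\frak{m}_m^k$. Computing directly,
$$f(x,\xi)-P(x,\xi)=\sum_{|\zm|<k}\bigl(f_\zm(x)-P_\zm(x)\bigr)\xi^\zm+\sum_{|\zm|\ge k}f_\zm(x)\,\xi^\zm,$$
and by construction the first sum consists of terms $0_\zm(x^{k-|\zm|})\,\xi^\zm$ with $0\le|\zm|\le k-1$, while the second is a series supported on $|\zm|\ge k$. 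Comparison with the explicit description of $\frak{m}_m^{k}$ given in Proposition \ref{Claim4} (applied with $k$ in place of $k+1$) shows that the germ of $f-P$ at $m$ lies exactly in $\frak{m}_m^k$.

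There is no serious obstacle here; the theorem is essentially a $\Z_2^n$-graded repackaging of the classical Taylor approximation. The only two places where one must be careful are (i) ensuring one works in a convex centered chart so that ordinary Taylor-with-remainder applies to each coefficient $f_\zm$, and (ii) invoking the finiteness of $\{\zm:|\zm|<k\}$ to legitimately call $P$ a polynomial rather than a formal series, which is precisely where the finite tuple $\mathbf{q}$ (and the total-weight filtration rather than component-wise filtration) plays its role.
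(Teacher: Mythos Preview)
Your proposal is correct. Note, however, that this theorem appears in the paper's Preliminaries section and is \emph{not} proved there: it is recalled from \cite{CGP14}, so there is no proof in this paper to compare against. That said, your argument is exactly the natural one and is almost certainly what the authors had in mind when they wrote the companion paper: reduce to a convex centered chart, Taylor-expand each coefficient $f_\zm$ with $|\zm|<k$ to the appropriate order, and read off membership in $\frak{m}_m^k$ from the explicit description in Proposition~\ref{Claim4}.

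One small remark: you put some effort into arguing that $\{\zm:|\zm|<k\}$ is finite so that $P$ is a genuine polynomial. In fact the paper's convention (stated just after Theorem~\ref{Claim3}) is more permissive: a ``polynomial section'' is allowed to be a \emph{formal series} in the $\xi^a$ provided each coefficient $P_\zm(x)$ is polynomial in $x$. So the finiteness observation, while correct and pleasant, is not strictly necessary for the statement as the authors intend it; your truncation at $|\zm|<k$ is simply the most economical choice, since all higher-order $\xi$-terms already lie in $\frak{m}_m^k$.
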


In this statement the polynomial $P$ depends on $m,$ $f$, and $k$, and the variables $(x,\xi)$ are (pullbacks of) coordinates centered at $m$. Let us further emphasize that here and in the following, the term `polynomial section' refers to a formal series $\sum_{|\zm|\ge 0} P_\zm(x)\xi^\zm$ in the parameters $\xi^a$ with coefficients $P_\zm(x)\in \op{Pol}_V(V)$ that are polynomial in the base variables $x^i$.

\begin{thm}\label{FundaTheoSuperm} If ${\cal M}=(M,\cA_M)$ is a $\Z_2^n$-supermanifold of dimension $p|\mathbf{q}\,$, $${\cal V}^{\,u|\mathbf{v}}=(V,\Ci_V[[\xi^1,\ldots,\xi^v]])$$ a $\Z_2^n$-superdomain of dimension $u|\mathbf{v}$, $v=|\mathbf{v}|$, and if $(s^j,\zeta^b)$ is an $(u+v)$-tuple of homogeneous $\Z_2^n$-functions in $\cA_M(M)$ that have the same $\Z_2^n$-degrees as the coordinates $(x^j,\xi^b)$ in ${\cal V}^{\,u|\mathbf{v}}$ and satisfy $\lp  \ze s^{1}, \ldots, \ze s^{u}\rp(M)\subset V,$ there exists a unique morphism of $\Z_2^n$-supermanifolds
$\Psi =(\psi, \psi^{*}) :  {\cal M} \raa {\cal V}^{u|\mathbf{v}},$ such that $ s^{j}= \psi^{*}_Vx^{j}$ and $\zeta^{b}= \psi^{*}_V\xi^{b}$.\end{thm}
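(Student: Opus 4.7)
The plan follows the classical Leites pattern, but must handle the absence of nilpotency for nonzero even generators through systematic use of the $\cJ_M$-adic completeness. First I construct the underlying map $\psi:M\to V$. Any candidate morphism $\Psi'=(\psi',{(\psi')}^*)$ compatible with $s^j={(\psi')}^*_V x^j$ must descend to the residue field at every stalk, which forces $\psi'(m)=(\ze s^1(m),\ldots,\ze s^u(m))$; by hypothesis this point lies in $V$, and since each $\ze s^j$ is smooth on $M$, the resulting $\psi$ is smooth. This simultaneously pins down $\psi$ and proves uniqueness of the base map.

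Next I construct $\psi^*$. On polynomial sections $P(x,\xi)=\sum_\mu P_\mu(x)\xi^\mu$ the algebra property forces the unambiguous definition $\psi^*_V(P)=P(s,\zeta):=\sum_\mu P_\mu(s)\zeta^\mu$. For an arbitrary $g\in\Ci(V')$ I set
$$\psi^*_{V'}(g)\;:=\;\sum_{\alpha\in\N^u}\frac{1}{\alpha!}\,\bigl((\partial^\alpha g)\circ\psi\bigr)\cdot(s-\ze s)^\alpha,$$
where $(s-\ze s)^\alpha=\prod_j(s^j-\ze s^j)^{\alpha_j}$. Since each $s^j-\ze s^j$ lies in $\cJ_M(\psi^{-1}(V'))$, the $\alpha$-summand lies in $\cJ_M^{|\alpha|}$, so the partial sums form a Cauchy net in the $\cJ_M$-adic topology; Hausdorff-completeness of $\cA_M$ gives a well-defined limit in $\cA_M(\psi^{-1}(V'))$. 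For a general formal series $g=\sum_\mu g_\mu(x)\xi^\mu$ I then set $\psi^*_{V'}(g)=\sum_\mu\psi^*_{V'}(g_\mu)\zeta^\mu$, which converges in the $\cJ_M$-adic topology because every $\xi^b$ has nonzero $\Z_2^n$-degree and hence $\zeta^\mu\in\cJ_M^{|\mu|}$.

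Three verifications remain. Preservation of the $\Z_2^n$-grading is immediate from the matching degrees of $(s^j,\zeta^b)$ and $(x^j,\xi^b)$. The locally ringed condition $\psi^*_m({\frak m}_{\psi(m)})\subset{\frak m}_m$ reduces to $\ze(\psi^*_{V'}(g))=g\circ\psi$, which holds because only the $\alpha=0,\,\mu=0$ term of the series survives under $\ze$. The one genuinely delicate point is multiplicativity $\psi^*(fg)=\psi^*(f)\psi^*(g)$. I plan to check this modulo $\cJ_M^{N+1}$ for each $N$: applying Theorem \ref{Claim3} at each base point replaces $f$ and $g$ by polynomials modulo ${\frak m}^{N+1}$, where multiplicativity becomes a purely algebraic identity (trivially true for polynomial substitution), after which continuity of multiplication in the adic topology controls the remainders and Hausdorff-completeness lifts the identity globally.

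Uniqueness of $\psi^*$ is then short: given any $\Psi'$ satisfying the hypotheses, ${(\psi')}^*$ agrees with $\psi^*$ on polynomials by the algebra property; by Proposition \ref{Claim1} both maps are ${\frak m}$-continuous, so Theorem \ref{Claim3} forces agreement modulo ${\frak m}_m^k$ for every $m$ and every $k$, and Hausdorff-completeness yields equality on every stalk and hence on every section. The main obstacle I foresee is precisely the multiplicativity check for $\psi^*$: the defining formula is an infinite Taylor series that is only meaningful in the adic limit, and one cannot short-circuit the bookkeeping using nilpotency as in the classical $\Z_2$-graded case — the $\Z_2^n$-sign rule requires tracking the parities of the formal parameters carefully through every step of the reduction to polynomials.
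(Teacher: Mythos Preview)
The paper does not prove Theorem~\ref{FundaTheoSuperm}: it is stated in Section~2 (Preliminaries) as a result recalled from the companion paper \cite{CGP14}, alongside the other propositions on $\frak m$-adic continuity, polynomial approximation, and Hausdorff-completeness. There is therefore no proof in this paper to compare your proposal against.

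That said, your sketch follows exactly the expected architecture (the one used in \cite{CGP14} and, before that, in the classical $\Z_2$-case by Leites and others): determine $\psi$ from the reduced sections, define $\psi^*$ on smooth functions by the formal Taylor series in the nilpotent-free direction $s-\ze s\in\cJ_M$, extend to formal series in $\xi$ by substitution $\xi^b\mapsto\zeta^b$, and invoke Hausdorff-completeness for convergence. Your uniqueness argument is also the standard one: agreement on polynomials plus $\frak m$-adic continuity (Proposition~\ref{Claim1}) plus polynomial density (Theorem~\ref{Claim3}). The only point where I would press you for more detail is the multiplicativity step: your plan to reduce modulo $\cJ_M^{N+1}$ via polynomial approximation is correct in spirit, but you should make explicit that the Taylor-series definition of $\psi^*_{V'}(g)$ for $g\in\Ci(V')$ is itself $\R$-algebra-multiplicative \emph{before} passing to formal series in $\xi$ --- this is where the generalized Leibniz identity for $\partial^\alpha(fg)$ does the work, and the argument is cleanest if isolated there rather than entangled with the $\xi^\mu$ bookkeeping.
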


\section{Split $\Z_2^n$-supermanifolds}

The prototypical (smooth) supermanifold is the locally super ringed space ({\small LSRS}) $(M,\zG(\w T^*M))$ of differential forms over a classical (smooth) manifold $M$. More generally, if $E$ is a vector bundle over $M$, the {\small LSRS} $(M,\zG(\w E^*))$ is a supermanifold of dimension $p|q$, where $p=\dim M$ and $q=\op{rank}E$. This supermanifold $(M,\zG(\w E^*))$ is usually denoted $\zP E$ or $E[1]$ and viewed as the total space of the vector bundle $E$ with fiber coordinates of parity $1$. We refer to a supermanifold $\zP E=E[1]$ induced by a vector bundle as a \emph{split supermanifold}. The importance of this example relies on the fact that any smooth supermanifold is of this type \cite{Bat1}, \cite{Bat2}, \cite{Gaw77}. More precisely, for any smooth supermanifold $\cM=(M,\cA)$ over a classical smooth manifold $M$, there exists a vector bundle $E$ over $M$, such that $\cal M$ is diffeomorphic to $\zP E$. This isomorphism is noncanonical and cannot be used in the complex category. It is known as the \emph{Batchelor-Gawedzki Theorem}.\medskip

A similar proposition holds for $\N$-manifolds: Any $\N$-manifold $\cM=(M,\cA)$ of degree $n$, $n\in \N\setminus\{0\}$, is noncanonically diffeomorphic to a split $\N$-manifold $\zP E$, where $E=\bigoplus_{i=1}^nE_{-i}$ is a graded vector bundle over $M$ concentrated in degrees $-1,\ldots,-n$, and where $\zP E=\bigoplus_{i=1}^nE_{-i}[i]$ means that the fiber coordinates of $E_{-i}$ are viewed as having degree $i$ \cite{BP12}.\medskip

As already mentioned, the major objective of this paper is to extend the Batchelor-Gawedzki Theorem to $\Z_2^n$-supermanifolds. We first show that any $\Z_2^n\setminus\{0\}$-graded vector bundle $E$ implements a $\Z_2^n$-supermanifold $\zP E$. Begin, to simplify notation, with a $\Z_2^2\setminus\{0\}$-graded vector bundle $E=E_{01}\oplus E_{10}\oplus E_{11}$ over a manifold $M$, and set 
$$\zP E=E_{01}[01]\oplus E_{10}[10]\oplus E_{11}[11]\;,$$ 
where the degrees in the square brackets are assigned to the fiber coordinates. In view of the form of the coordinate transformations in the vector bundles $E_{ij}[ij]$, this assignment is consistent. Denote by 
$$(\zP E)^*=E_{01}[01]^*\oplus E_{10}[10]^*\oplus E_{11}[11]^*\;$$ 
the dual bundle -- the vectors of each bundle $E_{ij}[ij]^*$ have degree $ij$ -- and by $\odot^k (\zP E)^*$, $k\ge 2,$ the $\Z_2^n$-graded symmetric $k$-tensor bundle of $(\zP E)^*.$ 
By graded symmetric we mean here that, if $e'\in E_{ij; m}[ij]^*$ and $e''\in E_{k\ell; m}[k\ell]^*$, $m\in M,$ then 
$$e'\odot e''=(-1)^{ik+j\ell}e''\odot e'\;.$$ 
Consider now the function sheaf 
\be\label{GradMfdDVB}
\cA(\zP E):=\prod_{k\ge 0}\zG(\odot^k(\zP E)^*)\simeq\prod_{k\ge 0}\bigoplus_{r+s+t=k}\zG(\w^r E^*_{01}\otimes \w^s E^*_{10}\otimes \vee^t E^*_{11})\;,
\ee 
where $\w$ and $\vee$ denote the antisymmetric and symmetric tensor products, respectively. Of course, 
$$\zG(\w^r E^*_{01}\otimes \w^s E^*_{10}\otimes \vee^t E^*_{11})
=
\w^r \zG(E^*_{01})\otimes \w^s \zG(E^*_{10})\otimes {\vee}^t \zG(E^*_{11})\;,$$ 
where the {\small RHS} tensor products are over $\Ci$. The limit $\cA(\zP E)$ is a sheaf of $\Z_2^n$-graded $\Ci$-modules for the standard sum and action by scalars. Moreover, $\cA(\zP E)$ is a sheaf of $\Z_2^n$-commutative associative unital $\R$-algebras. 
The multiplication $\odot$ is also the standard one: when writing formal series $\sum_{k=0}^\infty\Psi_k$ instead of families 
$(\Psi_0,\Psi_1,\ldots)$, we get 
$$\label{MultFormSer}\sum_{k}\Psi_k'\odot\sum_\ell\Psi_\ell''=\sum_n\sum_{k+\ell=n}\Psi_k'\odot\Psi_\ell''\;.$$

\begin{remark} To understand how the $\Z_2^n$-commutativity of two vectors $v\in \zG(E_{ij}^*)$ and $w\in \zG(E_{k\ell}^*)$ is encoded in the {\small RHS} of (\ref{GradMfdDVB}), consider the following simplified situation. If $V$ and $W$ are real vector spaces, we have 
$$\label{TensSum}
\w(V\oplus W)\simeq \w V\otimes \w W\quad\text{and}\quad \w^n(V\oplus W)\simeq \bigoplus_{i+j=n}\w^i V\otimes \w^j W\;.
$$ 
For $n=2$, the isomorphism identifies 
$$(v+w)\w(v'+w')=v\w v'+v\w w'+w\w v'+w\w w'=v\w v'+v\w w'-v'\w w+w\w w'$$ with $$(v\w v')\otimes 1+v\otimes w'-v'\otimes w+1\otimes(w\w w')\;,$$ 
so that the anticommutation of elements $v'\in V\oplus W$ and $w\in V\oplus W$ seems to be lost. However, when defining the algebra structure on the {\small RHS}, we pull back the algebra structure from the {\small LHS}, i.e. we set $$(v\otimes w)\cdot (v'\otimes w')\simeq v\w w\w v'\w w'=-v\w v'\w w\w w'\simeq  -(v\w v')\otimes(w\w w')\;.$$\end{remark}\medskip

Let us now come back to the sheaf $\cA(\zP E)$. If we work over a common local trivialization domain $U\subset M$ of $E_{01}$, $E_{10}$, and $E_{11}$, and denote the base coordinates by $x$ and the fiber coordinates by $\xi$, $\zh$, and $\zy$, respectively ($\xi$, $\zh$, and $\zy$ are then also the base vectors of the fibers of $E_{01}^*$, $E_{10}^*$, and $E_{11}^*$), a function in $\cA(\zP E)(U)$ reads $$\sum f(x)\xi^{i_1}\ldots\xi^{i_r}\zh^{j_1}\ldots\zh^{j_s}\zy^{k_1}\ldots\zy^{k_t}\;,$$ where the series is over all $$i_1<\ldots<i_r,\;\,j_1<\ldots<j_s,\;\,k_1\le\ldots\le k_t\;,$$ $$0\le r\le \op{rank}(E_{01}),\;\,0\le s\le \op{rank}(E_{10}),\;\,0\le t\le \infty\;,$$ $$f(x)\in\Ci(U)\;.$$ Therefore, the sheaf $\cA(\zP E)$ is locally canonically isomorphic to $\Ci_{\R^p}[[\xi,\zh,\zy]]$, where $p=\dim M$. Eventually, we associated in a natural way a $\Z_2^2$-supermanifold $(M,\cA(\zP E))$, see (\ref{GradMfdDVB}), to a $\Z_2^2\setminus\{0\}$-graded vector bundle $E$ over $M$. This assignment (\ref{GradMfdDVB}) extends straightforwardly to $\Z_2^n\setminus\{0\}$-graded vector bundles.

\begin{definition} We refer to a $\Z_2^n$-supermanifold $(M,\cA(\zP E))$, which is implemented by a $\Z_2^n\setminus\{0\}$-graded vector bundle $E$ over a manifold $M$, as a \emph{split $\Z_2^n$-supermanifold}.\end{definition}

\begin{remark} Split $\Z_2^n$-supermanifolds and superized $n$-vector bundles, see Section 5 in \cite{CGP14}, are different concepts.\end{remark}

Indeed, let $n=2$ and let $\cE$ be a double vector bundle with side vector bundles $E_{01}\to M$ and $E_{10}\to M$, and core vector bundle $E_{11}\to M$. Then $\cE$ is {\it noncanonically} isomorphic to the double vector bundle $E:=E_{01}\oplus E_{10}\oplus E_{11}$. The {\it space of decompositions} is a nonempty affine space modelled on $\zG(E_{01}^*\otimes E_{10}^*\otimes E_{11})$. 
As for the double vector bundle structure on $E$, recall that the pullback of $E_{10}\oplus E_{11}\to M$ (resp., $E_{01}\oplus E_{11}\to M$) over $E_{01}\to M$ (resp., $E_{10}\to M$) is a vector bundle structure over $E_{01}$ (resp., $E_{10}$) on the manifold $E$. These two bundle structures are compatible and $E$ is actually a double vector bundle. However, the vector bundle structure $E=E_{01}\oplus E_{10}\oplus E_{11}\to M$ is not part of the double vector bundle $E$. The construction of a $\Z_2^2$-supermanifold via superization of $\cE\simeq E$ uses (of course) the double vector bundle structure of $E$ \cite{CGP14}. On the other hand, to build the split $\Z_2^2$-supermanifold associated to $E$, we only needed the vector bundle structure on $E=E_{01}\oplus E_{10}\oplus E_{11}\to M$.

\begin{remark}\label{NonSplitSMfds} Superized $n$-vector bundles are examples of (usually) not canonically split $\Z_2^n$-supermanifolds.\end{remark}

To better understand this claim, think about superization, not intrinsically as above, but, as usual, as the assignment of a degree to each coordinate, provided the coordinate transformations respect this grading and the cocycle condition remains valid for the new noncommuting coordinates (see Remark \ref{CompatSuperCocycle}).

Moreover, look at double vector bundles from the `locally trivial fiber bundle' standpoint, see \cite{Vor12}. A double vector bundle can actually be viewed as a (locally trivial) {\it fiber} bundle $E\to M$ whose standard fiber is a $\Z_2^2\setminus\{0\}$-graded real vector space $V_{01}\oplus V_{10}\oplus V_{11}$ and whose coordinate transformations have the form \be\label{CoordTransDVB}\left\{\begin{array}{l}\za^a=f^{a}_{a'}(x)\xi^{a'}\;,\\ \zb^b=g^{b}_{b'}(x)\zh^{b'}\;,\\ \zg^c=h^{c}_{c'}(x)\zy^{c'}+k^{c}_{a',b'}(x)\xi^{a'}\zh^{b'}\;,\end{array}\right.\ee where the coefficients are smooth in the base coordinates. The $\Z_2^2$-superization of a double vector bundle is a $\Z_2^2$-supermanifold of dimension $\dim M|(\dim V_{01},\dim V_{10},\dim V_{11})$, see \cite{CGP14}, Sections 4.2 and 5.

On the other hand, a $\Z_2^2\setminus\{0\}$-graded vector bundle is a (locally trivial) {\it vector} bundle $E=E_{01}\oplus E_{10}\oplus E_{11}\to M$. It follows that its standard fiber is a $\Z_2^2\setminus\{0\}$-graded real vector space $V_{01}\oplus V_{10}\oplus V_{11}$ and that its coordinate transformations have the form $$\left\{\begin{array}{l}\za^a=f^{a}_{a'}(x)\xi^{a'}\;,\\ \zb^b=g^{b}_{b'}(x)\zh^{b'}\;,\\ \zg^c=h^{c}_{c'}(x)\zy^{c'}\;.\end{array}\right.$$ The superization of a $\Z_2^2\setminus\{0\}$-graded vector bundle is a split $\Z_2^2$-supermanifold of dimension $\dim M|(\dim V_{01},\dim V_{10},\dim V_{11})$. 

Remark \ref{NonSplitSMfds} follows.\medskip

Since possible problems with the cocycle condition after superization are not satisfactorily explained in the literature, let us emphasize that

\begin{remark}\label{CompatSuperCocycle} The $\Z_2^n$-superization of $n$-vector bundles is compatible with the cocycle condition.\end{remark}

Consider the case $n=3$ and assume for simplicity that there exists exactly one formal parameter in each nonzero $\Z_2^3$-degree: \be\label{Order}\xi_{111},\,\xi_{110},\,\xi_{101},\,\xi_{100},\,\xi_{011},\,\xi_{010},\,\xi_{001}\;.\ee

In the coordinate transformations of a 3-vector bundle the variables $\xi_{100},\xi_{010},\xi_{001}$ (resp., $\xi_{110},\xi_{101},\xi_{011}$) transform as $\xi,\zh$ (resp., $\zy$) in (\ref{CoordTransDVB}) above, whereas for $\xi_{111}$, we have \be\label{CoordTransTVD}\xi'_{111}=f(x)\xi_{111}+g(x)\xi_{110}\xi_{001}+h(x)\xi_{101}\xi_{010}+k(x)\xi_{100}\xi_{011}+\ell(x)\xi_{100}\xi_{010}\xi_{001}\;,\ee see \cite{Vor12}.

The point is that, if the cocycle condition holds for the commuting vector bundle coordinates, it must also hold for the supercommuting superized variables (of course, for superized variables we have to fix an order, e.g. the reversed lexicographical order (\ref{Order})). In the case of classical supercommutative variables with commutation rules given by the total degrees, this requirement is satisfied only if one introduces the following sign in the superized coordinate transformation (\ref{CoordTransTVD}): $$\label{CoordTransTVDsign}\xi'_{111}=f(x)\xi_{111}+g(x)\xi_{110}\xi_{001}-h(x)\xi_{101}\xi_{010}+k(x)\xi_{100}\xi_{011}+\ell(x)\xi_{100}\xi_{010}\xi_{001}\;.$$  Indeed, let for instance \be\label{Tr1}\xi''_{111}=\xi'_{111}-\xi'_{101}\xi'_{010}\;\ee and \be\label{Tr2}\xi'_{111}=-2\xi_{101}\xi_{010},\;\,\xi'_{101}=3\xi_{100}\xi_{001},\;\,\xi'_{010}=\xi_{010}\;,\ee where these minus signs appear. Consider now the same transformations without the minus signs and with commuting variables. If the latter satisfy the cocycle condition, i.e. if $$\xi''_{111}=2\xi_{101}\xi_{010}+3\xi_{100}\xi_{010}\xi_{001}\;,$$ then the superized variable $\xi''_{111}$ is given by \be\label{Tr3}\xi''_{111}=-2\xi_{101}\xi_{010}+3\xi_{100}\xi_{010}\xi_{001}\;.\ee It is now easily checked that the transformations (\ref{Tr1}), (\ref{Tr2}) and (\ref{Tr3}), with minus signs and supercommutative variables, satisfy as well the cocycle condition. This is the crucial point in the proof of Theorem 7.1 in \cite{GR09}. The necessity to introduce the minus signs is due to the anticommutation of the classical supervariables $\xi_{010}$ and $\xi_{001}$. In our $\Z_2^n$-graded case, these variables are $\Z_2^3$-commutative, i.e. they commute. This explains why no sign changes are needed in the $\Z_2^n$-case.

\section{Batchelor-Gawedzki theorem}

Even in the case of classical supermanifolds, only a small number of complete proofs of the Batchelor-Gawedzki Theorem can be found in the literature. Below, we give a proof for $\Z_2^n$-supermanifolds that is based on a \v{C}ech cohomology argument \cite{Man}. For sheaf-theoretic issues, we refer the reader to \cite{CGP14}, Section 7.3 and Proof of Proposition 6.7.

\subsection{Cohomological invariant}

Let $\cM=(M,\cA_M)$ be a $\Z_2^n$-supermanifold, $n\ge 1$, let $\ze:\cA_M\to \Ci_M$, $\cJ_M=\ker\ze$, and let $$\cA_M\supset \cJ_M\supset \cJ^2_M\supset\ldots$$ be the decreasing filtration of the structure sheaf by sheaves of $\Z_2^n$-graded ideals. 
To simplify notation, we omit in the sequel subscript $M$. The quotients $\cJ^{k+1}/\cJ^{k+2}$, $k\ge 0$, are locally free sheaves of modules over $\Ci\simeq\cA/\cJ$. 
In particular, $${\cal S}:={\cJ}/{\cJ}^2$$ is a locally free sheaf of $\Z_2^n\setminus\{0\}$-graded $\Ci$-modules (see e.g. \cite{CGP14}, Example 3.2), or, equivalently, a family of $2^n-1$ locally free sheaves of $\Ci$-modules. 
Hence, there exists a $\Z_2^n\setminus\{0\}$-graded vector bundle $E\to M$ such that $${\cal S}\simeq \zG((\zP E)^*)\;.$$ For instance, in the case $n=2$, we get $${\cal S}\simeq\zG(E_{01}[01]^*\oplus E_{10}[10]^*\oplus E_{11}[11]^*)\;.$$ As above, denote by $\odot$ the $\Z_2^n$-graded symmetric tensor product of $\Z_2^n$-graded $\Ci$-modules and of $\Z_2^n$-graded vector bundles. 
Then \be\label{Isos}\zG(\odot^{k+1}(\zP E)^*)\simeq\odot^{k+1}{\cal S}\simeq\cJ^{k+1}/\cJ^{k+2}\;\ee (indeed, the sheaf morphism, which is well-defined on sections by $$\odot^{k+1}\cJ/\cJ^2\ni [s_1]\odot \ldots \odot [s_{k+1}] \mapsto [s_1\cdots s_{k+1}]\in\cJ^{k+1}/\cJ^{k+2}\;,$$ is locally an isomorphism). Our goal is to show that \be\label{Batchelor}\cA(\zP E):=\prod_{k\ge -1}\zG(\odot^{k+1}(\zP E)^*)=\prod_{k\ge -1}\odot^{k+1}{\cal S}\simeq \cA\;\ee as sheaf of $\Z_2^n$-commutative associative unital $\R$-algebras. This $\Z_2^n$-isomorphism implies indeed the

\begin{thm}[Batchelor-Gawedzki Theorem for $\Z_2^n$-Supermanifolds] Any smooth $\Z_2^n$-super-manifold is (noncanonically) isomorphic to a split $\Z_2^n$-supermanifold.\end{thm}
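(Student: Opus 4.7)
The plan is to construct a morphism of sheaves of $\Z_2^n$-commutative unital $\R$-algebras $\Phi:\cA(\zP E)\to \cA$, where $E$ is the $\Z_2^n\setminus\{0\}$-graded vector bundle singled out by $\cJ/\cJ^2 \simeq \zG((\zP E)^*)$, and then to verify that $\Phi$ is in fact an isomorphism. Both sheaves carry the $\cJ$-adic filtration (on the left, it coincides with the filtration by symmetric tensor degree), and (\ref{Isos}) tells us that their associated gradeds are canonically isomorphic, so the real content of the theorem is to lift this graded identification to a filtered one.

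\textbf{First step --- identifying $E$.} I would begin by making $\cJ/\cJ^2 \simeq \zG((\zP E)^*)$ precise: locally, $\cJ/\cJ^2$ is the free $\Ci$-module on the classes of the nonzero-degree coordinates $\xi^a$, with their $\Z_2^n$-degrees preserved, which determines $E$ uniquely up to isomorphism as a $\Z_2^n\setminus\{0\}$-graded vector bundle over $M$. The higher isomorphisms (\ref{Isos}) then follow automatically from iterated multiplication in $\cA$.

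\textbf{Second step --- the algebra splitting $\zf:\Ci\to \cA$.} This is the hardest part and where I would concentrate effort. The strategy is inductive: produce unital algebra morphisms $\zf_k:\Ci\to \cA/\cJ^{k+1}$ splitting the canonical projections, with $\zf_0=\id$, and set $\zf=\varprojlim_k \zf_k$, which converges by the Hausdorff-completeness recalled in Section~2. For the inductive step, on any chart a choice of local coordinates gives a canonical local lift of $\zf_{k-1}$; two local lifts over an overlap differ by a map $\Ci\to \cJ^k/\cJ^{k+1}$, which a short computation shows to be a $\Ci$-derivation (the cross term $D(f)D(g)$ lies in $\cJ^{2k}\subset \cJ^{k+1}$ and vanishes). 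These derivations form sections of a $\Z_2^n\setminus\{0\}$-graded vector bundle, hence a fine sheaf; the resulting \v{C}ech $1$-cocycle is therefore a coboundary by a partition-of-unity argument, using the existence of partitions of unity for $\Z_2^n$-supermanifolds proved in \cite{CGP14}. Subtracting the corrections glues the local lifts into a global $\zf_k$, and the inverse limit produces $\zf$.

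\textbf{Third step --- extending $\zf$ to $\Phi$.} With $\zf$ in hand, $\cA$ becomes a sheaf of $\Ci$-algebras and $0\to \cJ^2\to \cJ\to \cJ/\cJ^2\to 0$ is a short exact sequence of $\Ci$-modules whose quotient is locally free. Projectivity and a partition-of-unity argument again yield a $\Ci$-linear section $\Phi_1:\zG((\zP E)^*)\to \cJ\subset \cA$. I would extend it multiplicatively by $\Phi_k(s_1\odot\cdots\odot s_k):=\Phi_1(s_1)\cdots \Phi_1(s_k)$, which is well defined because the $\Z_2^n$-graded symmetry of $\odot$ matches the $\Z_2^n$-commutativity of $\cA$. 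Since $\Phi_k$ takes values in $\cJ^k$, the rule $\Phi(\sum_k\Psi_k):=\sum_k\Phi_k(\Psi_k)$ produces Cauchy sequences that converge in $\cA$ by Hausdorff-completeness. The resulting $\Phi$ is an algebra morphism whose associated graded is the canonical isomorphism (\ref{Isos}); a standard filtered argument --- using Hausdorff-completeness on both sides to reduce surjectivity and injectivity to their graded versions --- finally upgrades this to the isomorphism of sheaves demanded by the theorem.
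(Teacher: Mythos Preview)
Your proposal is correct and follows essentially the same three-step strategy as the paper: identify $E$ via $\cJ/\cJ^2$, build the splitting $\zf:\Ci\to\cA$ inductively through $\cA/\cJ^{k+1}$ using the \v{C}ech argument that local lifts differ by derivations in a fine sheaf, then extend multiplicatively to $\Phi$ and pass to the completion. The only notable variation is in the third step, where the paper splits the truncated sequences $0\to\cJ^2/\cJ^k\to\cJ/\cJ^k\to\cJ/\cJ^2\to 0$ (since $\cJ,\cJ^2$ are not locally free) and takes the inverse limit, whereas you invoke projectivity of the locally free quotient $\cJ/\cJ^2$ directly---both arguments are valid and yield the same $\Phi_1$.
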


It is clear that locally the two considered sheaves (see (\ref{Batchelor})) coincide. To prove that they are isomorphic, we will build a morphism $\prod_{k\ge -1}\odot^{k+1}{\cal S}\to \cA$ of sheaves of $\Z_2^n$-commutative associative unital $\R$-algebras. The idea is to extend a morphism ${\cal S}\to \cA$, or $\cJ/\cJ^2\to \cJ$. The latter will be obtained as a splitting of the sequence $0\to \cJ^2\to \cJ\to \cJ/\cJ^2\to 0$. One of the problems to solve is to show that this sequence can be viewed as a sequence of sheaves of $\Ci$-modules. Therefore, we need an embedding $\Ci\to \cA$.

\subsection{Projection of $\cM$ onto $M$}

Let $\cM=(M,\cA)$ be a $\Z_2^n$-supermanifold of dimension $p|\mathbf{q}$ and let $0\to\cJ\to\cA\stackrel{\ze}{\to}\Ci\to 0$ be the corresponding basic short exact sequence ({\small SES}). We will embed (noncanonically) $\Ci$ into $\cA$, i.e. construct a morphism $\zf:\Ci\to \cA$ of sheaves of $\Z_2^n$-commutative associative unital $\R$-algebras, such that $\ze\circ\zf=\op{id}$. In the case of $\N$-manifolds this embedding is canonical, what makes the proof of the corresponding Batchelor-Gawedzki theorem much simpler.\medskip

We build $\zf$ as the limit of an $\N$-indexed sequence of morphisms $\zf_{k}:\Ci\to \cA/\cJ^{k+1}$ of sheaves of $\Z_2^n$-commutative associative unital $\R$-algebras:

\[
\begin{tikzpicture}
\matrix(m)[matrix of math nodes, column sep=2em, row sep=4em]
{
&&&\Ci&&&\\
&&&\cA=\varprojlim_k\cA/\cJ^k&&&\\
\ldots&&{\cA}/{\cJ^{k+1}}&&{\cA}/{\cJ^{k+2}}&&\ldots\\
};
\path[->]
(m-3-7)edge node[auto]{$ $} (m-3-5)
(m-3-5)edge node[auto]{$f_{k,k+1} $} (m-3-3)
(m-3-3)edge node[auto]{$ $} (m-3-1)
(m-2-4)edge node[right]{$\pi_k$} (m-3-3)
            edge node[left]{$\pi_{k+1}$} (m-3-5)
(m-1-4)edge[bend right=20] node[left]{$\zf_{k}$} (m-3-3)
            edge[bend left=20] node[right]{$\zf_{k+1}$} (m-3-5)
(m-1-4)edge node[auto]{$\zf$} (m-2-4)
;
\end{tikzpicture}
\]

\noindent The sequence $\zf_k$ will be obtained by induction on $k$, starting from $\zf_0=\id$: we assume that we already got $\zf_{i+1}$ as an extension of $\zf_i$ for $0\le i\le k-1$, and we aim at extending $\zf_k:\Ci\to \cA/\cJ^{k+1}$ to 
$$\zf_{k+1}:\Ci\to \cA/\cJ^{k+2}\;.$$ 
The word `extension' is used here in the sense that 
$$\label{Ext}f_{k,k+1}\circ\zf_{k+1}=\zf_{k}\;.$$ 
For any open subset $\zW\subset M$, we build extensions $\zf_{k+1,\zW}:\Ci(\zW)\to \cA(\zW)/\cJ^{k+2}(\zW)$ of $\zf_{k,\zW}$, via a consistent construction of extensions of the $\zf_{k,U}$ by local (in the sense of presheaf morphisms) degree zero unital $\R$-algebra morphisms 
$$\label{LocExt}\zf_{k+1,U}:\Ci(U)\to \cA(U)/\cJ^{k+2}(U)\simeq\Ci(U)[[\xi^1,\ldots,\xi^q]]_{\le k+1}$$
over a cover $\cal U$ of $\zW$ by $\Z_2^n$-superchart domains $U$. Here subscript $\le k+1$ means that we confine ourselves to `series' whose terms contain at most $k+1$ formal parameters. Further, `consistent' means that, if $U,V$ are two domains of the cover, we must have 
$$\label{Consistency1}\zf_{k+1,U}|_{U\cap V}=\zf_{k+1,V}|_{U\cap V}\;.$$
(Indeed, if $f_\zW\in\Ci(\zW)$, the sections $\zf_{k+1,U}(f_\zW|_U)\in \cA(U)/\cJ^{k+2}(U)$, $U\in\cal U$, define a unique section $\zf_{k+1,\zW}(f_\zW)\in \cA(\zW)/\cJ^{k+2}(\zW)$, if their restrictions to the intersections $U\cap V$ coincide, i.e. if 
$$\label{Consistency2}\zf_{k+1,U}(f_\zW|_U)=\zf_{k+1,V}(f_\zW|_V)$$
on $U\cap V$.) 

\begin{lem}
Over any $\Z_2^n$-chart domain $U$, there exists an extension $\zf_{k+1,U}:\Ci(U)\to \cO(U)_{\le k+1}:=\Ci(U)[[\xi^1,\ldots,\xi^q]]_{\le k+1}$ of $\zf_{k,U}$ as local degree 0 unital $\R$-algebra morphism.
\end{lem}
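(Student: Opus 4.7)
The plan is to build $\zf_{k+1,U}$ by lifting the coordinate-level data of $\zf_{k,U}$ to a genuine morphism of $\Z_2^n$-supermanifolds via the Fundamental Theorem (Theorem \ref{FundaTheoSuperm}), and then truncating. Working in the chart coordinates $(x^i,\xi^a)$ on $U$, we have the identifications $\cA(U)\simeq\Ci(U)[[\xi]]$ and $\cO(U)_{\le k+1}\simeq\cA(U)/\cJ^{k+2}(U)$. First I would choose, for each $i=1,\ldots,p$, a degree-$0$ lift $s^i\in\cA(U)$ of $\zf_{k,U}(x^i)\in\cO(U)_{\le k}$ along the canonical projection. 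Since $\ze\circ\zf_{k,U}=\id$ by the inductive construction, the body $\ze(s^i)$ equals $x^i$, so the reality hypothesis $(\ze s^1,\ldots,\ze s^p)(U)\subset U$ of Theorem \ref{FundaTheoSuperm} is automatic.

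Next I would apply that theorem with target the trivial $\Z_2^n$-superdomain $(U,\Ci_U)$ of dimension $p|0$ to obtain a unique morphism $\Psi=(\id,\Psi^{*})\colon(U,\cA|_U)\to(U,\Ci_U)$ with $\Psi^{*}(x^i)=s^i$, and set $\zf_{k+1,U}:=\pi_{k+1}\circ\Psi^{*}$, where $\pi_{k+1}\colon\cA(U)\to\cO(U)_{\le k+1}$ is the quotient projection. As a composition of degree-$0$ unital $\R$-algebra morphisms of presheaves, $\zf_{k+1,U}$ is itself of the required type, and its locality is inherited from $\Psi^{*}$. The same recipe applies on every chart domain, leaving the consistency check between overlapping charts as a separate (later) task.

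The main obstacle lies in verifying the extension identity $f_{k,k+1}\circ\zf_{k+1,U}=\zf_{k,U}$. Both sides are degree-$0$ unital $\R$-algebra morphisms $\Ci(U)\to\cO(U)_{\le k}$ that, by construction, agree on the coordinates $x^i$; however, Theorem \ref{FundaTheoSuperm} directly gives uniqueness only for morphisms into \emph{full} $\Z_2^n$-superdomains, not into the truncations $\cO(U)_{\le k}$. To close this gap I would invoke the Taylor-type expansion underlying the proof of Theorem \ref{FundaTheoSuperm} together with the polynomial approximation statement of Theorem \ref{Claim3}: any such morphism $\phi$ satisfies, for every $f\in\Ci(U)$,
\[
\phi(f)\;\equiv\;\sum_{|\alpha|\le k}\tfrac{1}{\alpha!}\,\partial^{\alpha}f(x)\cdot(\phi(x)-x)^{\alpha}\pmod{\cJ^{k+1}(U)},
\]
a finite sum since each factor $\phi(x^i)-x^i$ lies in $\cJ(U)/\cJ^{k+1}(U)$ (because $\ze\circ\phi=\id$). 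Both $f_{k,k+1}\circ\zf_{k+1,U}$ and $\zf_{k,U}$ are then determined by the common data $\phi(x^i)=\zf_{k,U}(x^i)$ and must coincide on all of $\Ci(U)$. As a bonus, the same formula yields a direct Taylor-type definition of $\zf_{k+1,U}$ whose algebra-morphism property follows straight from the multinomial identity (the even-degree nilpotent-modulo-$\cJ^{k+2}$ entries $\phi(x^i)-x^i$ commute), giving an alternative route that sidesteps Theorem \ref{FundaTheoSuperm} altogether.
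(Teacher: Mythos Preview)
Your proposal is correct and follows essentially the same route as the paper: feed the coordinate values $\zf_{k,U}(x^i)$ (which already lie in $\cO(U)$, so no non-canonical lift is needed) into Theorem~\ref{FundaTheoSuperm} to obtain a morphism $\overline{\zf}_{k,U}=\Psi^*:\Ci(U)\to\cO(U)$, and set $\zf_{k+1,U}:=\overline{\zf}_{k,U}|_{\le k+1}$. The only point where the paper is more explicit is the verification that $\overline{\zf}_{k,U}|_{\le k}=\zf_{k,U}$: instead of asserting your global Taylor identity directly, it works stalkwise, first checking (with the care your remark anticipates, since the truncated target is not a superdomain) that both induced maps send $\frak{m}_{x_0}^\ell$ into $\frak{m}'^\ell_{x_0}$, and then combining agreement on polynomials with Theorem~\ref{Claim3} and Proposition~\ref{Claim4} to force all coefficients of the difference to vanish at every $x_0$---which is precisely the rigorous form of your Taylor-approximation sketch.
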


\begin{proof} We look for an extension $\zf_{k+1,U}$ of the local degree 0 unital $\R$-algebra morphism $\zf_{k,U}:\Ci(U)\to \cO(U)_{\le k}\subset \cO(U)$ (where the latter is built step by step as an extension of $\zf_0=\op{id}$). Denote by $x=(x^1,\ldots,x^p)$ the base coordinates in $U$. The `pullbacks' $$\zf_{k,U}(x^i)=x^i+\sum_{1\le|\zm|\le k}f^i_\zm(x)\xi^\zm\in\cO(U)$$ uniquely define a local degree 0 unital $\R$-algebra morphism $\overline{\zf}_{k,U}:\Ci(U)\to \cO(U)$, see Theorem \ref{FundaTheoSuperm}. 
Since the algebra structure in $\cO(U)_{\le k}$ is given by the multiplication of $\cO(U)$ truncated at order $k$, it is easily seen that the restriction $\overline{\zf}_{k,U}|_{\le k}:\Ci(U)\to \cO(U)_{\le k}$ is still a local degree 0 unital $\R$-algebra morphism. 
For the same reason, the morphisms $\zf_{k,U}$ and $\overline{\zf}_{k,U}|_{\le k}$ coincide on polynomial functions $P(x)\in\Ci(U)$. We will actually prove that these morphisms coincide on all functions $f(x)\in\Ci(U)$. Then $\overline{\zf}_{k,U}|_{\le k+1}$ is the searched extension $\zf_{k+1,U}$.\medskip

Let now $x_0\in U$ and denote by $$\frak{m}_{x_0}=\{[g]_{x_0}: g(x_0)=0\}\quad\text{and}\quad\frak{m}'_{x_0}=\{[h]_{x_0}: (\ze h)(x_0)=0\}$$ the unique maximal homogeneous ideal of $\Ci_{x_0}$ and $\cO_{x_0}$, respectively. The morphism $\overline{\zf}_{k,U}$ (resp., $\overline{\zf}_{k,U}|_{\le k}$, $\zf_{k,U}$) is a local (in the sense of presheaf morphism) degree zero unital $\R$-algebra morphism (resp., are local degree zero $\R$-linear maps) $\Ci(U)\to\cO(U)$ and thus defines an algebra morphism (resp., linear maps) $\overline{\zf}_{k,x_0}$ (resp., $\overline{\zf}_{k,x_0}|_{\le k}$, $\zf_{k,x_0}$) between $\Ci_{x_0}$ and $\cO_{x_0}$. The maps $\overline{\zf}_{k,x_0}|_{\le k}$ and $\zf_{k,x_0}$ send $\frak{m}^\ell_{x_0}$ into $\frak{m'}^\ell_{x_0}$, $\ell\ge 1$.

Indeed, if $[g]_{x_0}\in\frak{m}_{x_0}^\ell$, then $$[\overline{\zf}_{k,U}(g)]_{x_0}=\overline{\zf}_{k,x_0}[g]_{x_0}\in\frak{m}'^\ell_{x_0}\;,$$ so $$\overline{\zf}_{k,x_0}|_{\le k}[g]_{x_0}=[\overline{\zf}_{k,U}(g)|_{\le k}]_{x_0}\in\frak{m}'^\ell_{x_0}\;,$$ in view of Propositions \ref{Claim1} and \ref{Claim4}.

Since the target of $\zf_{k,U}$ is truncated and thus not a `$\Z_2^n$-superdomain', some care is required. Note first that, if $[g]_{x_0}\in \frak{m}_{x_0}$, then $\ze(\zf_{k,U}g)(x_0)=g(x_0)=0$, so that $\zf_{k,x_0}[g]_{x_0}\in\frak{m}'_{x_0}$. Moreover, if $[g_1]_{x_0},\ldots,[g_\ell]_{x_0}\in\frak{m}_{x_0}$, then $$[\zf_{k,U}(g_1)\ldots\zf_{k,U}(g_\ell)]_{x_0}=[\zf_{k,U}(g_1)]_{x_0}\ldots[\zf_{k,U}(g_\ell)]_{x_0}=\zf_{k,x_0}[g_1]_{x_0}\ldots\zf_{k,x_0}[g_\ell]_{x_0}\in\frak{m}'^\ell_{x_0}\;,$$ so that $$\zf_{k,x_0}\lp[g_1]_{x_0}\ldots[g_\ell]_{x_0}\rp=\zf_{k,x_0}[g_1\ldots g_\ell]_{x_0}=[\lp\zf_{k,U}(g_1)\ldots\zf_{k,U}(g_\ell)\rp|_{\le k}]_{x_0}\in\frak{m}'^\ell_{x_0}\;.$$

Consider finally $f(x)\in\Ci(U)$ and $x_0\in U$, as well as the `series' $$\zf_{k,U}(f)-\overline{\zf}_{k,U}(f)|_{\le k}\in\cO(U)_{\le k}\;.$$ Let $\ell>k$. Theorem \ref{Claim3} implies that there is a polynomial $P(x)$ such that $[f]_{x_0}-[P]_{x_0}\in\frak{m}_{x_0}^\ell.$ It follows that $$[\zf_{k,U}(f)-\overline{\zf}_{k,U}(f)|_{\le k}]_{x_0}= \zf_{k,x_0}\lp[f]_{x_0}-[P]_{x_0}\rp-\overline{\zf}_{k,x_0}|_{\le k}\lp[f]_{x_0}-[P]_{x_0}\rp\in\frak{m}'^\ell_{x_0}\;.$$ Hence, all the coefficients of $\zf_{k,U}(f)-\overline{\zf}_{k,U}(f)|_{\le k}$ vanish at $x_0$, see Proposition \ref{Claim4}, for all $x_0\in U$, and all functions $f(x)\in\Ci(U)$.\end{proof}

To finalize the construction of the sheaf morphism $\zf:\Ci\to \cA$, it now suffices to solve the consistency problem. Let $U$ and $V$ be $\Z_2^n$-chart domains and let $\zf_{k+1,U}$ and $\zf_{k+1,V}$ be extensions of $\zf_{k,U}$ and $\zf_{k,V}$, respectively, which exist according to the preceding lemma. 
The difference
$$\label{Consistency3}\zw_{{k+1},UV}(f):=\zf_{k+1,U}|_{U\cap V}(f)-\zf_{k+1,V}|_{U\cap V}(f)\in\cO(U\cap V)_{\le k+1}\;,$$ 
$f\in\Ci(U\cap V)$,
defines a derivation $$\zw_{k+1,UV}:\Ci(U\cap V)\to\cO(U\cap V)_{=k+1}\;.$$

Indeed, since $\zf_{k+1,U}=\overline{\zf}_{k,U}|_{\le k+1}=\zf_{k,U}+\overline{\zf}_{k,U}|_{= k+1}$, we have $$\zw_{k+1,UV}(f)=\zf_{k,U}|_{U\cap V}(f)+\overline{\zf}_{k,U}|_{=k+1}|_{U\cap V}(f)-\zf_{k,V}|_{U\cap V}(f)-\overline{\zf}_{k,V}|_{=k+1}|_{U\cap V}(f)\;,$$ where the restrictions to $U\cap V$ are valued in $\cO(U\cap V)|_{\le k+1}$, i.e., after coordinate transformation we omit the terms of order $> k+1$. Note now that in a coordinate transformation the order cannot decrease, so that the second and fourth terms of the {\small RHS} contain only terms of order $k+1$. The same holds for the difference of the first and third terms. Indeed, since the $\zf_{k,U}$ have already been constructed consistently, they coincide on intersections up to order $k$: the remaining terms are of order $k+1$.

As for the derivation property, start from

$$\zf_{k+1,U}|_{U\cap V}(fg)=\zf_{k+1,V}|_{U\cap V}(fg)+\zw_{k+1,UV}(fg)\,.$$
The left hand side equals
\be\zf_{k+1,U}|_{U\cap V}(f)\cdot\zf_{k+1,U}|_{U\cap V}(g)=
\zf_{k+1,V}|_{U\cap V}(f)\cdot\zf_{k+1,V}|_{U\cap V}(g)+
f\cdot\zw_{k+1,UV}(g)+\zw_{k+1,UV}(f)\cdot g\;.
\ee
Indeed, the products are products in $\cO(U\cap V)$ truncated at order $k+1$, i.e. we omit the terms of order $\ge k+2$. Since
$$\zf_{k+1,V}|_{U\cap V}(f)\cdot\zf_{k+1,V}|_{U\cap V}(g)=\zf_{k+1,V}|_{U\cap V}(fg)\;,
$$
we finally get
\be \zw_{k+1,UV}(fg)=\zw_{k+1,UV}(f)\cdot g+f\cdot\zw_{k+1,UV}(g)\;.
\ee
In view of (\ref{Isos}), the map $\zw_{k+1,UV}$ is a derivation $$\zw_{k+1,UV}:\Ci(U\cap V)\to (\cJ^{k+1}(U\cap V))^0/(\cJ^{k+2}(U\cap V))^0\simeq \zG(U\cap V,(\odot^{k+1}(\zP E)^*)^{0})\;,$$ i.e. it is a vector field valued in symmetric $(k+1)$-tensors: $$\zw_{k+1,UV}\in \zG(U\cap V,TM\otimes (\odot^{k+1}(\zP E)^*)^{0})\;.$$ This \v{C}ech 1-cochain $\zw_{k+1}$ is obviously a 1-cocycle. However, as well-known, the existence of a partition of unity in $M$ implies that $\check{H}^{\bullet\ge 1}(M,\cE)=0$, for any locally free sheaf $\cE$ over $M$. Hence, there exists a 0-cochain $\zh_{k+1}$, i.e. a family $\zh_{k+1,U}\in \zG(U,TM\otimes(\odot^{k+1}(\zP E)^*)^{0})$, or, still, a family of derivations $$\zh_{k+1,U}:\Ci(U)\to \zG(U,(\odot^{k+1}(\zP E)^*)^{0})\simeq (\cJ^{k+1}(U))^0/(\cJ^{k+2}(U))^0\simeq\cO^0(U)_{= k+1}\;,$$ such that $$\zf_{k+1,U}|_{U\cap V}-\zf_{k+1,V}|_{U\cap V}=\zw_{k+1,UV}=\zh_{k+1,V}|_{U\cap V}-\zh_{k+1,U}|_{U\cap V}\;.$$ It is now easily checked that, since the multiplication in $\cO(U)_{\le k+1}$ is the truncation of the standard multiplication in $\cO(U)$, the sum $\zf'_{k+1,U}:=\zf_{k+1,U}+\zh_{k+1,U}:\Ci(U)\to\cO(U)_{\le k+1}$ is a local degree 0 unital $\R$-algebra morphism, which satisfies the consistency condition and extends $\zf_{k,U}$. This proves the existence of the searched morphism $\zf:\Ci\to \cA$ of sheaves of $\Z_2^n$-commutative associative unital $\R$-algebras.\medskip

More precisely, we constructed sheaf morphism $$\zf:\Ci\to \cA\;,$$ which splits the {\small SES} $$0\to \cJ\longrightarrow \cA\stackrel{\ze}{\longrightarrow}\Ci\to 0$$ of $\Z_2^n$-commutative associative unital $\R$-algebras, i.e. which satisfies $\ze\circ\zf=\op{id}$. Indeed, for any open subset $V\subset M$ and any $f\in\Ci(V)$, we have, on an open cover by $\Z_2^n$-chart domains $U\subset V$, $$(\ze_V\zf_V f)|_U=\ze_U\zf_U(f|_U)=(\op{id}_Vf)|_U\;.$$ Hence, the

\begin{thm}
For any $\Z_2^n$-supermanifold $(M,\cA_M)$, the short exact sequence $$0\to {\cal J}_M\to \cA_M\stackrel{\ze}{\to }\Ci_M\to 0$$ of sheaves of $\Z_2^n$-commutative associative unital $\R$-algebras is noncanonically split.
\end{thm}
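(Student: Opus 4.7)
The plan is to construct the splitting $\zf:\Ci\to\cA$ as the inverse limit of a coherent tower of sheaf morphisms $\zf_k:\Ci\to\cA/\cJ^{k+1}$ of $\Z_2^n$-commutative unital $\R$-algebras, with $\zf_0=\op{id}$ and $f_{k,k+1}\circ\zf_{k+1}=\zf_k$. The Hausdorff-completeness of $\cA$ with respect to the $\cJ$-adic topology, $\cA=\varprojlim_k\cA/\cJ^k$, ensures that any such compatible family assembles to a well-defined morphism $\zf$, and the splitting condition $\ze\circ\zf=\op{id}$ reduces to the trivial base case $k=0$. The genuine work lies in the inductive step $\zf_k\leadsto\zf_{k+1}$, which I would split into a local existence part on each $\Z_2^n$-chart domain, and a global gluing part over $M$.

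For local existence on a chart $U$ with coordinates $(x,\xi)$, the idea is to lift $\zf_{k,U}(x^i)\in\cO(U)_{\le k}$ to arbitrary representatives $s^i\in\cO(U)$ of the same underlying degrees as $x^i$, and apply Theorem~\ref{FundaTheoSuperm} to obtain a unique morphism $\overline{\zf}_{k,U}:\Ci(U)\to\cO(U)$ with $\overline{\zf}_{k,U}(x^i)=s^i$; the candidate extension is then $\zf_{k+1,U}:=\overline{\zf}_{k,U}|_{\le k+1}$. The nontrivial point is to verify that $\zf_{k,U}$ and $\overline{\zf}_{k,U}|_{\le k}$ actually agree on \emph{every} smooth $f(x)$, not merely on polynomials where this is obvious. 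I would argue stalk by stalk at $x_0\in U$: for arbitrary $\ell>k$ use the polynomial approximation Theorem~\ref{Claim3} to write $[f]_{x_0}-[P]_{x_0}\in\mathfrak{m}_{x_0}^{\ell}$, then apply Propositions~\ref{Claim1} and~\ref{Claim4} to conclude that both stalks land in $(\mathfrak{m}'_{x_0})^{\ell}$; since this holds for every $\ell$, Proposition~\ref{Claim4} forces the difference to vanish coefficient-wise. The fact that the truncated algebras $\cO(U)_{\le k}$ are not themselves superdomain algebras is precisely why this detour through the untruncated $\cO(U)$ is needed.

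The hard part is global gluing, since the local $\zf_{k+1,U}$ are highly noncanonical. Given two chart domains $U,V$ and arbitrary local liftings, the difference
$$\zw_{k+1,UV}:=\zf_{k+1,U}|_{U\cap V}-\zf_{k+1,V}|_{U\cap V}$$
lies automatically in $\cO(U\cap V)_{=k+1}$ because both liftings extend the same $\zf_{k,\cdot}$, and a direct calculation with truncated products (orders $\ge k+2$ are discarded) shows that $\zw_{k+1,UV}$ is an $\R$-linear derivation from $\Ci(U\cap V)$ into $(\cJ^{k+1}/\cJ^{k+2})^0(U\cap V)\simeq\zG(U\cap V,(\odot^{k+1}(\zP E)^*)^0)$, cf.~(\ref{Isos}). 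Thus $\zw_{k+1}$ is a \v{C}ech $1$-cocycle with coefficients in the locally free $\Ci$-module associated to $TM\otimes(\odot^{k+1}(\zP E)^*)^0$. Fine-sheaf vanishing $\check{H}^1(M,-)=0$, which follows from the existence of $\Ci$-partitions of unity on $M$, produces a coboundary family $\zh_{k+1,U}$ with $\zw_{k+1,UV}=\zh_{k+1,V}|_{U\cap V}-\zh_{k+1,U}|_{U\cap V}$, and because each $\zh_{k+1,U}$ is a derivation valued in the top-degree piece $\cO(U)_{=k+1}$, the corrected maps $\zf'_{k+1,U}:=\zf_{k+1,U}+\zh_{k+1,U}$ remain local degree~$0$ unital $\R$-algebra morphisms extending $\zf_{k,U}$ and now agree on overlaps. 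This closes the induction, produces the sheaf morphism $\zf_{k+1}$, and in the limit yields the desired splitting $\zf$.
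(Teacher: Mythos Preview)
Your proposal is correct and follows essentially the same approach as the paper: an inductive construction of a compatible tower $\zf_k:\Ci\to\cA/\cJ^{k+1}$, with local existence obtained by applying Theorem~\ref{FundaTheoSuperm} to the coordinate images and then verifying agreement with $\zf_{k,U}$ on all smooth functions via the stalkwise polynomial-approximation argument (Theorems~\ref{Claim3}, Propositions~\ref{Claim1} and~\ref{Claim4}), and global consistency achieved by interpreting the defects $\zw_{k+1,UV}$ as a \v{C}ech $1$-cocycle of derivations valued in the locally free sheaf $TM\otimes(\odot^{k+1}(\zP E)^*)^0$ and killing it with a partition-of-unity coboundary. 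The only cosmetic difference is that the paper takes $s^i=\zf_{k,U}(x^i)$ viewed directly as an element of $\cO(U)$ rather than an ``arbitrary lift'', but this does not affect the argument.
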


\subsection{Algebra morphisms}

Let us recall that a {\small SES} $0\to W\to V\to U\to 0$ of smooth vector bundles over a same smooth manifold $M$ (and vector bundle maps that cover identity) is always split -- essentially because there exist smooth partitions of unity in $M$. Indeed, we can endow $V$ with a Riemannian metric (we glue local metrics by means of a partition of unity). This (global) metric induces a metric on each fibre $V_m$ and $V_m$ splits as $V_m = W_m\oplus W^\perp_m$, with self-explaining notation. We thus get a global splitting $V=W\oplus W^\perp$ as the metric is smooth. Of course, $W^\perp\simeq U$, as both bundles are isomorphic to the quotient bundle $V/W$.\medskip

It follows that a {\small SES} of locally free sheaves of $\Ci_M$-modules, where $M$ is a smooth manifold, is always split.\medskip

Of course, due to the embedding $\zf:\Ci\to \cA$, any $\cA$-module inherits a $\Ci$-module structure. As aforementioned, we need a splitting of the {\small SES} $$0\to \cJ^2\to \cJ\to {\cal S}=\cJ/\cJ^2\to 0$$ of sheaves of $\Ci_M$-modules. Since $\cJ^2$ and $\cJ$ are not locally free, we consider the {\small SES}s $$0\to \cJ^2/\cJ^k\stackrel{i_k}{\longrightarrow} \cJ/\cJ^k\stackrel{p_k}{\longrightarrow} {\cal S}=\cJ/\cJ^2\to 0\;,$$ $k\ge 2$, of locally free sheaves of $\Ci$-modules and denote by $\zF_k$ a splitting: $p_k\circ\zF_k=\op{id}_{\cal S}$. Since the category of sheaves of $\Ci$-modules is Abelian and since in any Abelian category the inverse limit functor is left exact, we get the exact sequence of sheaves of $\Ci$-modules $$0\to \lim \cJ^2/\cJ^k\stackrel{i=\lim i_k}{\longrightarrow} \lim \cJ/\cJ^k\stackrel{p=\lim p_k}{\longrightarrow} {\cal S}=\cJ/\cJ^2\;,$$ or, still, $$0\to \cJ^2\stackrel{i}{\longrightarrow} \cJ\stackrel{p}{\longrightarrow} {\cal S}=\cJ/\cJ^2\to 0\;,$$ where exactness at the last spot is obvious. When setting $\zF=\lim\zF_k$, we get $\zF:{\cal S}\to \cJ\subset\cA$ such that $$p\circ\zF =(\lim p_k)\circ(\lim \zF_k)=\lim (p_k\circ\zF_k)=\op{id}_{\cal S}\;.$$ 
Note that we actually deal with sheaves of $\Z_2^n$-graded $\Ci$-modules and corresponding sheaf morphisms. Therefore, the splitting is in fact a family of degree zero $\Ci$-linear maps $\zF_U:{\cal S}(U)\to \cJ(U)\subset\cA(U)$, $U\subset M$ (that commute with restrictions).\medskip

We now extend $\zF$ to a morphism $\cA(\zP E)=\prod_{k\ge 0}\odot^k{\cal S}\to \cA$ of sheaves of $\Z_2^n$-commutative associative unital $\R$-algebras. Since such a morphism is made of a family of degree 0 unital $\R$-algebra morphisms between algebras of sections (that commute with restrictions), we deal in the sequel mainly with section spaces, but no notational difference will be made between the sheaves and their spaces of sections. As announced, we will show that the degree zero $\Ci$-linear map $\zF:{\cal S}\to \cJ\subset\cA$ extends to the needed degree 0 unital $\R$-algebra morphism -- also denoted by $\zF$. Indeed, define $\zF$ first for each degree $k\ge 0$. For $k=0$, i.e. on $\Ci$, set $\zF:=\zf:\Ci\to \cA$, where $\zf$ is the above-constructed degree preserving unital algebra morphism. For $k=1$, i.e. on ${\cal S}$, the map $\zF$ is the inducing map $\zF:{\cal S}\to \cJ\subset\cA$. On $\odot^{k\ge 2}{\cal S}$, 
we set, for any $\psi_1,\ldots,\psi_k\in{\cal S}$, $$\zF(\psi_1\odot\ldots\odot\psi_k):=\zF(\psi_1)\cdot\ldots\cdot\zF(\psi_k)\in \cJ^k\subset\cA\;.$$ This extension is well-defined since the {\small RHS} is $\Z_2^n$-commutative and $\Ci$-multilinear. Indeed, the multiplication in $\cA$ has these properties; in particular, if $f\in\Ci$ and $s',s''\in\cA$, we have $$s'\cdot(f s'')=s'\cdot(\zf(f)\cdot s'')=\zf(f)\cdot(s'\cdot s'')=f(s'\cdot s'')\;.$$ Eventually, for $\sum_{k=0}^\infty\Psi_k\in\cA(\zP E)=\prod_{k\ge 0}\odot^k{\cal S},$ we set $$\zF(\sum_{k=0}^\infty\Psi_k):=\sum_{k=0}^\infty\zF(\Psi_k)\;,$$ where the {\small RHS} is a Cauchy sequence of partial sums in $\cA$ with respect to the filtration induced by $\cJ$. Since $\cA$ is Hausdorff-complete, this {\small RHS} sequence has a unique limit $\sum_{k=0}^\infty\zF(\Psi_k)\in\cA$. \smallskip

This map $\zF:\cA(\zP E)\to \cA$ respects the degrees and the units, and is an $\R$-algebra morphism. Indeed, note first that if $\sum_{k=0}^ns'_k\to s'$ and $\sum_{\ell=0}^ns''_\ell\to s''$ are two Cauchy sequences in $\cA$, then \be\label{Conv1}(\sum_{k=0}^ns'_k)\cdot(\sum_{\ell=0}^ns''_\ell)\to s'\cdot s''\;,\ee since $$(\sum_{k=0}^ns'_k)\cdot(\sum_{\ell=0}^ns''_\ell)-s'\cdot s''=(\sum_{k=0}^ns'_k-s')\cdot\sum_{\ell=0}^ns''_\ell+s'\cdot(\sum_{\ell=0}^ns''_\ell-s'')\in \cJ^{n+1}\;.$$ Further, we have \be\label{Conv2}\zF(\sum_{k=0}^\infty\Psi_k'\odot\sum_{\ell=0}^\infty\Psi''_\ell)=\sum_{n=0}^\infty\sum_{k+\ell=n}\zF(\Psi'_k\odot\Psi''_\ell)=\sum_{n=0}^\infty\sum_{k+\ell=n}\zF(\Psi'_k)\cdot\zF(\Psi''_\ell)\;.\ee It follows that, for any $r$, in $$\zF(\sum_{k=0}^\infty\Psi_k'\odot\sum_{\ell=0}^\infty\Psi''_\ell)-\sum_{n=0}^r\sum_{k+\ell=n}\zF(\Psi'_k)\cdot\zF(\Psi''_\ell)\,+$$ $$\sum_{n=0}^r\sum_{k+\ell=n}\zF(\Psi'_k)\cdot\zF(\Psi''_\ell)-\sum_{k=0}^r\zF(\Psi'_k)\cdot \sum_{\ell=0}^r\zF(\Psi''_\ell)\,+$$ $$\sum_{k=0}^r\zF(\Psi'_k)\cdot \sum_{\ell=0}^r\zF(\Psi''_\ell)-\zF(\sum_{k=0}^\infty\Psi_k')\cdot\zF(\sum_{\ell=0}^\infty\Psi''_\ell)\;,$$ the first difference (see (\ref{Conv2})) and the third difference (see (\ref{Conv1})) belong to $\cJ^{r+1}.$ As for the second difference, observe that its second term contains all the products of the first, but also additional terms, which however belong all to $\cJ^{r+1}$. As $\cap_s\cJ^s=\{0\}$, it follows that $$\zF(\sum_{k=0}^\infty\Psi_k'\odot\sum_{\ell=0}^\infty\Psi''_\ell)=\zF(\sum_{k=0}^\infty\Psi_k')\cdot\zF(\sum_{\ell=0}^\infty\Psi''_\ell)\;.$$

Actually the just constructed sheaf morphism is locally an isomorphism, what completes the proof of the theorem.

\section{Acknowledgements}

T. Covolo thanks the Luxembourgian NRF for support via AFR grant 2010-1, 786207. The research of J. Grabowski has been founded by the  Polish National Science Centre grant under the contract number DEC-2012/06/A/ST1/00256. The research of N. Poncin has been supported by the Grant GeoAlgPhys 2011-2014 awarded by the University of Luxembourg. Moreover, the authors are grateful to Sophie Morier-Genoud and Valentin Ovsienko, whose work on $\Z_2^n$-gradings \cite{MGO1}, \cite{MGO2} was the starting point of this series of papers on $\Z_2^n$-Supergeometry. They thank Dimitry Leites for his suggestions and valuable support. N. Poncin appreciated Pierre Schapira's explanations on sheaves.

\small{\vskip1cm

\noindent Tiffany COVOLO\\University of
Luxembourg\\ Campus Kirchberg, Mathematics Research Unit\\ 6, rue Richard Coudenhove-Kalergi, L-1359 Luxembourg
City, Grand-Duchy of Luxembourg
\\Email: tiffany.covolo@uni.lu \\

\noindent Janusz GRABOWSKI\\ Polish Academy of Sciences\\ Institute of
Mathematics\\ \'Sniadeckich 8, P.O. Box 21, 00-656 Warsaw,
Poland\\Email: jagrab@impan.pl \\

\noindent Norbert PONCIN\\University of Luxembourg\\
Campus Kirchberg, Mathematics Research Unit\\ 6, rue Richard Coudenhove-Kalergi, L-1359 Luxembourg
City, Grand-Duchy of Luxembourg\\Email: norbert.poncin@uni.lu}


\begin{thebibliography}{VOR22}

\bibitem[Bat79]{Bat1} \textsc{M. Batchelor}.
\newblock The Structure of Supermanifolds.
\newblock {\em Transactions of the American Mathematical Society}, 253 (Sep.1979), 329--338.

\bibitem[Bat80]{Bat2} \textsc{M. Batchelor}.
\newblock Two Approaches to Supermanifolds.
\newblock {\em Transactions of the American Mathematical Society}, 258 (Mar.1980), no 1, 257--270.

\bibitem[Ber87]{Ber} \textsc{F. A. Berezin}.
\newblock Introduction to superanalysis.
\newblock Mathematical Physics and Applied Mathematics, {9}, D. Reidel Publishing Co., Dordrecht,
1987.

\bibitem[BP12]{BP12} \textsc{G. Bonavolont\`a and N. Poncin}.
\newblock On the category of Lie $n$-algebroids.
\newblock \cJ. Geo. Phys., 73, 2013, 70--90.

\bibitem[CR12]{CR12} \textsc{D. Carchedi, D. Roytenberg}.
\newblock On Theories of Superalgebras of Differentiable Functions.
\newblock Theo. Appl. Cat., 28(30), 2013, 1022--1098.

\bibitem[CCF11]{CCF} \textsc{C. Carmeli, L. Caston, and R. Fioresi}.
\newblock Mathematical foundations of supersymmetry.
\newblock EMS Series of Lectures in Mathematics, 2011, 287 pp.

\bibitem[CGP14]{CGP14} \textsc{T. Covolo, \cJ. Grabowski, and N. Poncin}.
\newblock $\Z_2^n$-Supergeometry I: Manifolds and Morphisms.
\newblock \texttt{arXiv:1408.2755}.

\bibitem[COP12]{COP12} \textsc{T. Covolo, V. Ovsienko, and N. Poncin}.
\newblock Higher Trace and Berezinian of Matrices over a Clifford Algebra.
\newblock \cJ. Geo. Phys., 62, 2012, 2294--2319.

\bibitem[DM99]{DM99} \textsc{P. Deligne and \cJ. Morgan}.
\newblock Notes on Supersymmetry (following Joseph Bernstein).
\newblock In: Quantum Fields and Strings: A Course for Mathematicians.
\newblock AMS, Institute for Advanced Study, 1999, ISBN 0-8218-1198-3.

\bibitem[DSB03]{DSB} \textsc{S. Duplij, W. Siegel and \cJ. Bagger}.
\newblock Concise Encyclopedia of Supersymmetry and Noncommutative Structures in Mathematics and Physics.
\newblock Springer, 2003, ISBN 1-4020-1338-8.

\bibitem[Gaw77]{Gaw77} \textsc{K.~Gawedzki}.
\newblock Supersymmetries-mathematics of supergeometry.
\newblock Ann. I.H.P., A, 27(4), 1977, 335--366.

\bibitem[GKP09]{GKP1} \textsc{\cJ.~Grabowski, A.~Kotov and N.~Poncin}.
\newblock The Lie Superalgebra of a Supermanifold.
\newblock \cJ. Lie Theo., 20, 2010, 739--749.

\bibitem[GKP10]{GKP2} \textsc{\cJ.~Grabowski, A.~Kotov and N.~Poncin}.
\newblock Lie superalgebras of differential operators.
\newblock \cJ. Lie Theo., 23(1), 2013, 35--54.

\bibitem[GKP12]{GKP12} \textsc{J. Grabowski, D. Khudaverdyan and N. Poncin}.
\newblock The Supergeometry of Loday Algebroids.
\newblock J. Geo. Mech., 5(2), 2013, 185--213.

\bibitem[GR09]{GR09} \textsc{\cJ. Grabowski and M. Rotkiewicz}.
\newblock Higher vector bundles and multi-graded symplectic manifolds.
\newblock \cJ. Geo. Phys., {59}, 2009, 1285--1305.

\bibitem[Gre82]{Gre82} \textsc{P. Green}.
\newblock On holomorphic graded manifolds.
\newblock Proc. Amer. Math. Soc., 85(4), 1982, 587--590.

\bibitem[Lei80]{DAL} \textsc{D. Leites}.
\newblock Introduction to the Theory of Supermanifolds.
\newblock {Russian Math. Surveys}, 35 (1980), no.1, 1--64.

\bibitem[Lei11]{Lei11} \textsc{D. Leites (ed.)}.
\newblock Seminar on supersymmetry (v. 1. Algebra and calculus: Main chapters) (\cJ. Bernstein,
D. Leites, V. Shander).
\newblock MCCME, Moscow, 2011, 410 pp. (in Russian; an English version
is in preparation but available for perusal).

\bibitem[Man02]{Man} \textsc{Y. Manin}.
 \newblock Gauge Field Theory and Complex Geometry.
\newblock A series of comprehensive studies in mathematics, 289, 2nd edition, Springer, ISBN 970-3-540-61378-7.

\bibitem[Mol10]{Mol10} \textsc{V. Molotkov}.
\newblock Infinite-dimensional and colored supermanifolds.
\newblock \cJ. Nonlin. Math. Phys., 17(1), 2010, 375--446.

\bibitem[MGO09]{MGO1} \textsc{S. Morier-Genoud and V. Ovsienko}.
\newblock Well, Papa, can you multiply triplets?
\newblock Math. Intell., 31, 2009, 1--2.

\bibitem[MGO10]{MGO2} \textsc{S. Morier-Genoud and V. Ovsienko}.
\newblock Simple graded commutative algebras.
\newblock \cJ. Alg., 323, 2010, 1649--1664.

\bibitem[Sch97]{Sch97} \textsc{T. Schmitt}.
\newblock Supergeometry and quantum field theory, or: what is a classical configuration?
\newblock Rev. Math. Phys., {9(8)}, 1997, 993--1052.

\bibitem[Var04]{Var} \textsc{V.S. Varadarajan}. \newblock Supersymmetry for Mathematicians: An Introduction. \newblock Courant Lecture Notes, 11.
American Mathematical Society, Courant Institute of Mathematical
Sciences, 2004.

\bibitem[Vis11]{Vis11} \textsc{E. Vishnyakova}.
\newblock On complex Lie supergroups and split homogeneous supermanifolds.
\newblock Transformation groups, 16(1), 2011, 265--285.

\bibitem[Vis14]{Vis14} \textsc{E. Vishnyakova}.
\newblock On complex analytic $1|3$-dimensional supermanifolds associated with $\mathbb{C}P^1$.
\newblock Proceedings of the XXXII Workshop on Geometric Methods in Physics, 2014, 125--132.

\bibitem[Vor12]{Vor12} \textsc{T. Voronov}.
\newblock $Q$-manifolds and Mackenzie theory.
\newblock Comm. Math. Phys.,  315(2), 2012, 279--310. 


\end{thebibliography}
\end{document}